\newtheorem{theorem}{Theorem}[section]
\newtheorem{proposition}[theorem]{Proposition}
\newtheorem{corollary}[theorem]{Corollary}
\newtheorem*{thmA}{Theorem A}
\newtheorem*{thmB}{Theorem B}
\newtheorem*{thmC}{Theorem C}
\theoremstyle{definition}
\theoremstyle{remark}
\newtheorem{rmk}[theorem]{Remark}
\newtheorem{example}[theorem]{Example}
\newcommand{\scrD}{\mathscr{D}}
\newcommand{\scrU}{\mathscr{U}}
\newcommand{\calF}{\mathcal{F}}
\newcommand{\calG}{\mathcal{G}}
\newcommand{\calL}{\mathcal{L}}
\newcommand{\bfG}{\mathbf{G}}
\newcommand{\bfB}{\mathbf{B}}
\newcommand{\bfT}{\mathbf{T}}
\newcommand{\bfX}{\mathbf{X}}
\newcommand{\bfY}{\mathbf{Y}}
\def\bbF{{\mathbb{F}}}
\def\bbZ{{\mathbb{Z}}}
\def\bbK{{\mathbb{K}}}
\def\bbQ{{\mathbb{Q}}}
\def\bfW{{\mathbf W}}
\def\bfb{{\mathbf b}}
\def\bfc{{\mathbf c}}
\def\bfx{{\mathbf x}}
\def\bfw{{\mathbf w}}
\def\bfO{{\mathbf O}}
\def\D{\Delta}
\def\bpi{{\boldsymbol{\pi}}}
\newcommand{\simto}{\,\mathop{\rightarrow}\limits^\sim\,}
\newcommand{\longiso}{\stackrel{\sim}{\longrightarrow}}
\newcommand{\isolong}{\stackrel{\sim}{\longleftarrow}}
\DeclareMathOperator{\Ind}{{\mathrm{Ind}}}
\DeclareMathOperator{\Res}{{\mathrm{Res}}}
\DeclareMathOperator{\Hom}{{\mathrm{Hom}}}
\DeclareMathOperator{\For}{{\mathrm{For}}}
\DeclareMathOperator{\Indu}{{\underline{\mathrm{Ind}}}}
\def\lexp#1#2{\kern\scriptspace\vphantom{#2}^{#1}\kern-\scriptspace#2}
\newcommand{\flag}{\mathcal{B}}
\newenvironment{dedication}
        {\vspace{0ex}\begin{quotation}\begin{center}\begin{em}}
        {\par\end{em}\end{center}\end{quotation}}
\title[Translation by the full twist and Deligne--Lusztig varieties]{Translation by the full twist \\ and Deligne--Lusztig varieties}
\author{C\'edric Bonnaf\'e}
\address{C. Bonnaf\'e, IMAG, Universit\'e de Montpellier, CNRS, Montpellier, France} 
\email{cedric.bonnafe@univ-montp2.fr}
\author{Olivier Dudas}
\address{O. Dudas, IMJ-PRG, CNRS and Universit\'e Paris Diderot, 
UFR de Math\'ematiques, 
B\^atiment Sophie Germain,
5 rue Thomas Mann,
75205 Paris CEDEX 13,
France}
\email{olivier.dudas@imj-prg.fr}
\author{Rapha\"el Rouquier}
\address{R. Rouquier, UCLA Mathematics Department
Los Angeles, CA 90095-1555, 
USA}
\email{rouquier@math.ucla.edu}
\date{\today}
\thanks{The first and second authors gratefully acknowledge financial support by the ANR, Project
No ANR-16-CE40-0010-01. The third author gratefully acknowledges financial support 
by the NSF (grant DMS-1161999 and DMS-1702305) and by a
grant from the Simons Foundation (\#376202)}
\begin{document}

\begin{abstract}
We prove several conjectures about the cohomology of Deligne--Lusztig varieties: invariance under conjugation in the braid group, behaviour with respect to translation by the full twist, parity vanishing of the cohomology for the variety associated with the full twist. In the case of split groups of type $A$, and using previous results of the second author, this implies Brou\'e--Michel's conjecture on the disjointness of the cohomology for the variety associated to any {\it good} regular element. That conjecture was inspired
by Brou\'e's abelian defect group conjecture and the specific form Brou\'e
	conjectured for finite groups of Lie type \cite[R\^eves 1 et 2]{Br}.
\end{abstract}

\maketitle

\begin{dedication}
To Michel Brou\'e, for sharing his dreams.
\end{dedication}

\bigskip

\section*{Introduction}
Let $\bfG$ be a connected reductive algebraic group over $\overline{\bbF}_p$ and $F$
be an 
endomorphism of $\bfG$, a power of which is a Frobenius endomorphism. The finite group $\bfG^F$ is a finite reductive group. In 1976, Deligne and Lusztig  \cite{DeLu} defined 
quasi-projective varieties $\bfX(wF)$ attached to elements $w$ of the Weyl group $W$. These \emph{Deligne--Lusztig varieties} are endowed with an action of $\bfG^F$ and therefore their $\ell$-adic cohomology groups $H_c^i(\bfX(wF))$ are finite-dimensional representations of $\bfG^F$. 

\smallskip

The explicit decomposition of the virtual characters $\sum (-1)^i H_c^i(\bfX(wF))$ was
given by Lusztig (see \cite[Thm. 4.23]{LuBook}) and played a key role
in his classification of the unipotent characters of $\bfG^F$. Much less is known about 
individual cohomology groups, but Brou\'e \cite{Br,Brmit}, Brou\'e-Malle \cite{BrMa} and Brou\'e-Michel \cite{BM}
formulated conjectures about
those individual cohomology groups, for particular elements $w$ (good regular
elements).

\smallskip

Let $\flag \times \flag$ be the double flag variety of $\bfG$. The diagonal
$\bfG$-orbits are parametrized by the Weyl group, and the Deligne--Lusztig variety $\bfX(wF)$ is the intersection of the orbit $\bfO(w)$ attached to $w$ with the graph of $F$. Therefore the cohomology with compact support of $\bfX(wF)$ is the cohomology of
the restriction of the constant sheaf on $\bfO(w)$ to the graph of $F$ . This construction works more generally with any $\bfG^F$-equivariant constructible complex on the
double flag variety and this provides a triangulated functor
  $$\Indu_F : D_{\bfG^F}^b(\flag \times \flag) \longrightarrow D^b(\bfG^F \text{-mod})$$
whose image is the category of unipotent representations. This functor and its right adjoint were recently studied by Lusztig in \cite{Lu15}, in relation with character
sheaves.

\smallskip

There are several advantages of working with general sheaves. First, one can construct standard objects in $D_{\bfG^F}^b(\flag \times \flag)$ attached to any element of the braid group of $W$, not just elements of $W$. Second, the category $D_{\bfG^F}^b(\flag \times \flag)$ has a convolution for which the standard objects are invertible. Using further properties of the induction functor one shows that the cohomology of a Deligne--Lusztig variety $\bfX(wF)$ depends only on the conjugacy class of $wF$ in the braid group. This was conjectured in \cite[Conj.~3.1.7]{DMR06} and originally proved by Deligne--Lusztig in the case of conjugation by a sequence of cyclic shifts (see \cite[Thm. 1.6, case 1.]{DeLu}).

\begin{thmA}
	Let $w,w' \in W$. Assume that $w F$ and $w'F$ are  conjugate under the braid group of $W$. Then for every $i \in \bbZ$ the $\bfG^F$-modules $ H_c^i\big(\bfX(wF)\big)$ and  $H_c^i\big(\bfX(w'F)\big)$ are isomorphic. 
\end{thmA}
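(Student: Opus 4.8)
The plan is to realize the cohomology of $\bfX(wF)$ through the induction functor $\Indu_F$ applied to standard objects in $D^b_{\bfG^F}(\flag\times\flag)$, and then to transport the statement into a purely geometric/categorical statement about conjugation in the braid monoid. First I would recall the construction of the standard objects $B_\bw$ attached to elements $\bw$ of the braid monoid $B^+$ of $W$ (and more generally to $\bw$ in the braid group $B$), built by convolution from the generators $B_s$ associated to simple reflections $s$, together with the compatibility $B_w\cong$ (the constant sheaf on $\overline{\bfO(w)}$, suitably shifted) when $w\in W$ has the property that a reduced expression is used. The key structural facts I would invoke are: (i) the convolution product $\star$ on $D^b_{\bfG^F}(\flag\times\flag)$ is associative and the $B_s$, hence all $B_\bw$ for $\bw\in B$, are invertible for $\star$; (ii) $\Indu_F$ intertwines $\star$-convolution by a standard object $B_\bw$ with a shift-of-Frobenius operation, so that $\Indu_F(B_\bw\star C)$ can be rewritten as $\Indu_F$ applied to $C$ twisted by $\bw$; and (iii) $\Indu_F(B_{wF})\cong R\Gamma_c(\bfX(wF))$ as complexes of $\bfG^F$-modules (here I fold the $F$-twist into the notation, writing $B_{wF}$ for the relevant object on the graph of $F$). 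These three facts reduce Theorem A to showing that $\Indu_F(B_{\bw})$ depends, up to isomorphism in $D^b(\bfG^F\text{-mod})$, only on the $F$-conjugacy class of $\bw$ in $B$.

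Next I would carry out the conjugation-invariance step. Given $w,w'\in W$ with $wF$ and $w'F$ conjugate in the braid group, write $w'F = \bv^{-1}\,(wF)\,\bv$ for some $\bv\in B$, i.e. $B_{w'F}\cong B_{\bv}^{-1}\star B_{wF}\star B_{\bv}$ after incorporating the Frobenius twist. Applying $\Indu_F$ and using the intertwining property (ii) twice — once to move $B_\bv$ past the functor on the right as a twist, once to move $B_\bv^{-1}$ on the left — the two twists cancel because convolution is associative and $B_\bv\star B_\bv^{-1}\cong B_1$ is the unit. Concretely, $\Indu_F(B_{\bv}^{-1}\star B_{wF}\star B_{\bv})$ should be computed by first convolving on the double flag variety and then inducing; the adjunction/associativity lets one slide the inner $B_{wF}$ so that the outer conjugating factors are absorbed into a relabelling of the graph of $F$, producing something canonically isomorphic to $\Indu_F(B_{wF})=R\Gamma_c(\bfX(wF))$. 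Taking $H^i$ of the resulting isomorphism in $D^b(\bfG^F\text{-mod})$ gives the isomorphism of $\bfG^F$-modules $H_c^i(\bfX(wF))\cong H_c^i(\bfX(w'F))$ for all $i$, since an isomorphism in the derived category restricts to isomorphisms on all cohomology.

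I expect the main obstacle to be step (ii): establishing that $\Indu_F$ genuinely intertwines convolution by a standard object with a Frobenius-twisted operation at the level of complexes (not merely virtual characters or cohomology groups), and in particular checking that the standard objects $B_\bw$ for $\bw$ in the full braid group — as opposed to just the braid monoid $B^+$ — are well-defined and invertible, which requires verifying the braid relations for the $B_s$ under $\star$ and showing $B_s\star B_s^{-1}\cong B_1$. This is where the passage from $W$ to the braid group is really used, and it is the technical heart: the Deligne--Lusztig variety $\bfX(wF)$ only sees $w\in W$, but the proof needs the ambient braid-group structure on standard objects so that conjugation by an arbitrary $\bv\in B$ makes sense and acts trivially after induction. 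A secondary point to handle carefully is base change / proper support: to identify $\Indu_F(B_{wF})$ with $R\Gamma_c(\bfX(wF))$ one needs the restriction-to-the-graph-of-$F$ description of $\bfX(wF)$ from the introduction, together with the fact that the relevant morphisms are such that $Rf_!$ computes compactly-supported cohomology; I would organize this as a clean lemma before entering the conjugation argument. Once (ii) is in place, the rest is the formal three-line cancellation sketched above.
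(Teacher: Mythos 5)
Your proposal follows essentially the same route as the paper: identify $R\Gamma_c(\bfX(\bfb F))$ with $\Indu_F(\Delta(\bfb))$ (the paper's Proposition \ref{prop:imageofind}), use invertibility of the standard objects $\Delta(\bfb)$ for $\bfb\in B_W$ under convolution, and cancel the conjugating factors via the Frobenius-twisted cyclicity $\Indu_F(\calF\odot\calG)\simeq\Indu_F(F^*\calG\odot\calF)$, which is exactly the paper's key technical lemma (Proposition \ref{prop:trace}, proved by base change and the automorphism $\theta(B_1,B_2)=(B_2,F(B_1))$ with $\theta^2=F$) — the step you correctly single out as the technical heart. The only minor point you leave implicit is that conjugate braids have equal length, so the shifts $[N+\ell(\bfb)]$ match and the isomorphism in $D^b(\bbK\bfG^F\text{-mod})$ gives degreewise isomorphisms of cohomology.
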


The categories $D_{\bfG^F}^b(\flag \times \flag)$ and  $D^b(\bfG^F \text{-mod})$ are filtered by two-sided cells and families and Lusztig showed in \cite{Lu15} that the induction functor $\Indu_F$ and its adjoint respect these filtrations. The action by convolution of the standard object attached to the full twist $\bpi = (w_0)^2$ is a shift
on the subquotients of this filtration,
as shown by Bezrukavnikov--Finkelberg--Ostrik \cite[Remark 4.3]{charDmod}. We use this property to prove the following theorem which was conjectured by Digne--Michel--Rouquier \cite[Conj. 3.3.24]{DMR06}. Here $N$ is the number of positive roots of $W$ and $A_\rho$ is the degree of the generic degree of $\rho$. 

\begin{thmB}
	Let $\rho$ be a unipotent character of $\bfG^F$ and let $w \in W$. We have
	  $$ \big\langle \rho, H_c^i\big(\bfX(\boldsymbol\pi w F)\big) \big\rangle_{\bfG^F} = \big\langle\rho, H_c^{i-4N+2A_{\rho}}\big(\bfX(w F)\big)\big\rangle_{\bfG^F}$$
	for all $i \in \bbZ$.
\end{thmB}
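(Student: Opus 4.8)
The plan is to realise the translation by the full twist at the categorical level, using the convolution structure on $D_{\bfG^F}^b(\flag\times\flag)$ and the induction functor $\Indu_F$. Write $\calF_{\bf b}$ for the standard object attached to an element $\bf b$ of the braid group, so that $H_c^*(\bfX(wF))=H_c^*(\Indu_F(\calF_{wF}))$ (suitably shifted), and $\calF_{\bpi wF}\simeq\calF_{\bpi}\star\calF_{wF}$ via the convolution $\star$. Since $\Indu_F$ intertwines convolution on the source with the natural action on the target, the point is to understand what $\calF_{\bpi}\star(-)$ does after applying $\Indu_F$, and in particular on the $\rho$-isotypic part for a fixed unipotent character $\rho$.

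The key structural input, already quoted in the excerpt, is twofold. First, by Lusztig \cite{Lu15} both categories are filtered by two-sided cells / families and $\Indu_F$ together with its right adjoint respect this filtration; hence the $\rho$-multiplicity spaces $\langle\rho,H_c^i(\bfX(?))\rangle$ only see the subquotient of the filtration indexed by the family of $\rho$. Second, by Bezrukavnikov--Finkelberg--Ostrik \cite{charDmod} the action of $\calF_{\bpi}$ by convolution on each such subquotient is \emph{a shift}; the shift on the subquotient attached to the family $\calF$ of $\rho$ is by $2(\text{something})$, and one must match this with the numerical datum $4N-2A_\rho$. So the first real step is: identify the shift produced by $\calF_{\bpi}$ on the family of $\rho$. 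The natural candidate is that $\calF_{\bpi}$ acts on the cell subquotient by $[2(2N-A_\rho)]=[4N-2A_\rho]$ — here $4N=2\dim\flag$ is the "total" shift coming from the fact that $\bpi$ is the image of $w_0^2$ and each $w_0$-translation contributes $\dim\flag=N$, corrected by the central character / eigenvalue of Frobenius on the family, which is governed precisely by $A_\rho$. This is the step where I expect to have to be careful: the statement of \cite[Remark 4.3]{charDmod} is about the Hecke-categorical / character-sheaf side, and transporting it through $\Indu_F$ (which is only fully faithful up to the cell filtration, not globally) and matching the grading conventions with the cohomological degree conventions of $\bfX(wF)$ is the main obstacle.

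Granting that identification, the rest is formal. Apply $\Indu_F$ to the isomorphism $\calF_{\bpi wF}\simeq\calF_{\bpi}\star\calF_{wF}$ in $D^b(\bfG^F\text{-mod})$; pass to the subquotient of the cell filtration attached to the family of $\rho$ (legitimate since $\Indu_F$ respects the filtration, by Lusztig); on that subquotient replace $\calF_{\bpi}\star(-)$ by the shift $[4N-2A_\rho]$ coming from the previous paragraph; and finally take $\langle\rho,H_c^i(-)\rangle$ of both sides, which converts the shift $[4N-2A_\rho]$ into the index shift $i\mapsto i-(4N-2A_\rho)=i-4N+2A_\rho$. Since the multiplicity of $\rho$ in the cohomology of any $\bfX(?)$ is computed on that single subquotient, this yields
  $$\big\langle\rho,H_c^i(\bfX(\bpi wF))\big\rangle_{\bfG^F}=\big\langle\rho,H_c^{\,i-4N+2A_\rho}(\bfX(wF))\big\rangle_{\bfG^F}$$
for all $i$, which is the assertion of Theorem B. One small loose end to check is well-definedness: the braid-group element $\bpi wF$ is only defined up to the choice of a lift, but Theorem A guarantees the cohomology is a braid-conjugacy invariant, so the two sides are independent of the auxiliary choices made along the way.
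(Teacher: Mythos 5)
Your high-level strategy uses exactly the paper's ingredients (Bezrukavnikov--Finkelberg--Ostrik's description of $\Delta(\bpi)\odot-$ on cell subquotients, plus Lusztig's compatibility of induction/restriction with the cell and family filtrations), but the step you declare ``formal'' is precisely where the real argument lies, and as written it does not go through. The object $\Delta(\bpi)\odot\Delta(\bfb)$ does not lie in $\scrD_{\preccurlyeq\Gamma_\rho}$, and the BFO theorem only describes $\Delta(\bpi)\odot-$ on the subquotients $\scrD_{\preccurlyeq\Gamma}/\scrD_{\prec\Gamma}$; there is no ``pass to the subquotient attached to the family of $\rho$'' operation defined on the source of $\Indu_F$ through which the $\rho$-multiplicity can be read off merely because the functors respect the filtrations. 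The paper's solution is to move the whole question to the adjoint side: writing $\calF_\rho$ for the object of $D^b_\bfG(\bfG F)$ corresponding to $\rho$, adjunction gives $\langle\rho,H_c^{i+N+\ell(\bfb)}(\bfX(\bfb F))\rangle=\dim\Hom_{\scrD}\big(\Delta(\bfb)[i],\Res_F(\calF_\rho)\big)$, so all $\rho$-multiplicities are controlled by the single object $\Res_F(\calF_\rho)$. One then proves that $\Res_F(\calF_\rho)\in\scrD_{\preccurlyeq\Gamma_\rho}$ and that $\Hom_\scrD(\calG,\Res_F(\calF_\rho))=0$ for all $\calG\in\scrD_{\prec\Gamma_\rho}$ (using Proposition \ref{prop:compatibility} together with the identification of $\Indu_F(L(w))$ with shifted intersection cohomology of $\overline{\bfX(wF)}$). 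This orthogonality is exactly what upgrades the BFO shift, a priori valid only in the Serre quotient, to an honest isomorphism $\Delta(\bpi^{-1})\odot\Res_F(\calF_\rho)\simeq\Res_F(\calF_\rho)[2a_{\Gamma_\rho}]$ in $\scrD$ (Corollary \ref{cor:actionoftpi}); without it, ``legitimate since $\Indu_F$ respects the filtration'' has no content, since respecting a filtration does not allow one to compute multiplicities on a subquotient.

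The second gap is the one you flagged yourself: your identification of the shift as $[4N-2A_\rho]$ via Frobenius eigenvalues and ``each $\mathbf{w_0}$ contributes $N$'' is a heuristic, not an argument. In the paper the shift on the relevant subquotient is $[-2a_{\Gamma_\rho}]$, and the conversion to $4N-2A_\rho$ comes from the normalisation $\Indu_F(\Delta(\bfb))\simeq R\Gamma_c(\bfX(\bfb F))[N+\ell(\bfb)]$, the equality $\ell(\bpi)=2N$, and the identity $\ell(\bpi)+2a_{\Gamma_\rho}=4N-2A_\rho$; note the subtlety of Remark \ref{rmk:avalue} that $\Gamma_\rho$ is Lusztig's cell twisted by $w_0$, so $a_{\Gamma_\rho}$ is not the invariant $a_\rho$. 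Once the isomorphism for $\Res_F(\calF_\rho)$ and this numerical identity are in place, the adjunction chain does conclude as you indicate. Finally, the ``loose end'' you raise is not an issue: $\bpi\bfw$ is a well-defined element of $B_W^+$, and no appeal to Theorem A is needed for well-definedness.
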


In particular, the cohomology of the Deligne--Lusztig variety $\bfX(\boldsymbol\pi F)$ involves only principal series characters, and in even degrees only. The consequence of such result for $\mathrm{GL}_n(q)$ is that one can determine explicitly the cohomology of any Deligne--Lusztig variety attached to roots of $\bpi$ in the braid group,
using results of the second author \cite{Du}. In particular, the cohomology groups of different degrees have no irreducible constituent in common. This was conjectured by Brou\'e--Michel (see \cite[Conj. 5.7(1)]{BM}).

\begin{thmC}
	Assume that $(\bfG,F)$ is a (split) group of type $A$. Let $d \geq 1$ and $w \in W$ be a $d$-th root of $\bpi$ in the braid group. For all $i \neq j$, we have
	  $$ \big\langle H_c^i(\bfX(wF)) , H_c^j(\bfX(wF)) \big\rangle_{\bfG^F} = 0.$$
\end{thmC}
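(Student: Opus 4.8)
The plan is to derive Theorem C from Theorem B together with the analysis of Deligne--Lusztig varieties for good regular elements in split type $A$ carried out by the second author in \cite{Du}; the only genuinely new ingredient needed on top of \cite{Du} is the parity statement contained in Theorem B. First I would reduce to $\bfG=\mathrm{GL}_n$ with $F$ the standard $\bbF_q$-Frobenius: unipotent characters, and the $\bfG^F$-module structure of the cohomology of Deligne--Lusztig varieties, are unchanged under isogenies with central kernel, so nothing is lost. If $w\in W=\mathfrak{S}_n$ is a $d$-th root of $\bpi$ in the braid group, then, comparing images in $W$ and word lengths, $w$ has order $d$, $\ell(w)=2N/d$, and its positive lift $\bfw$ satisfies $\bfw^d=\bpi$, i.e.\ $w$ is a \emph{good} regular element in the sense of \cite{BM}. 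Since $\bpi$ maps to $1$ in $W$, one has $(wF)^d=F^d$ in $W\rtimes\langle F\rangle$, so $g\bfB\mapsto F(g)\bfB$ is a $\bfG^F$-equivariant automorphism of $\bfX(wF)$ whose $d$-th power is a Frobenius for an $\bbF_{q^d}$-structure; thus $\bigoplus_i H_c^i(\bfX(wF))$ is a $\bfG^F\rtimes\langle F\rangle$-module, and by Theorem A I may replace $wF$ by any element of its braid conjugacy class.

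The core of the argument is then as follows. In \cite{Du}, the relation $\bfw^d=\bpi$ is used to tie $\bfX(wF)$ to the Deligne--Lusztig variety $\bfX(\bpi F)$ of the full twist (and to the varieties attached to the full twists of the standard Levi subgroups $\mathbf{M}$ of $\mathrm{GL}_n$, which are again of type $A$), and there one obtains a complete, degree-by-degree description of $H_c^\bullet(\bfX(wF))$ as a $\bfG^F\rtimes\langle F\rangle$-module, \emph{conditional} on a parity statement for those full-twist varieties: concretely, that for each such $\mathbf{M}$ and each unipotent character $\sigma$ of $\mathbf{M}^F$ the multiplicity $\langle\sigma,H_c^i(\bfX^{\mathbf{M}}(\bpi_{\mathbf{M}}F))\rangle$ vanishes for every $i$ outside a single (necessarily even) value. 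This is exactly Theorem B applied to $(\mathbf{M},F)$ with $w=1$: it gives
\[ \big\langle\sigma,H_c^i(\bfX^{\mathbf{M}}(\bpi_{\mathbf{M}}F))\big\rangle=\big\langle\sigma,H_c^{\,i-4N_{\mathbf{M}}+2A_\sigma}(\bfX^{\mathbf{M}}(F))\big\rangle, \]
and $\bfX^{\mathbf{M}}(F)$ is the finite set $\mathbf{M}^F/\bfB_{\mathbf{M}}^F$, whose cohomology lives in degree $0$ and, in type $A$, affords every unipotent character (all unipotent characters of $\mathrm{GL}_m(q)$ lie in the principal series); hence $\sigma$ occurs only in the single even degree $4N_{\mathbf{M}}-2A_\sigma$, with $F$ acting there by the scalar $q^{2N_{\mathbf{M}}-A_\sigma}$. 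Feeding this into the description of \cite{Du} produces an explicit form of $H_c^\bullet(\bfX(wF))$ from which one reads off that every irreducible $\bfG^F$-module occurs in $H_c^i(\bfX(wF))$ for at most one $i$ — equivalently, $\langle H_c^i(\bfX(wF)),H_c^j(\bfX(wF))\rangle_{\bfG^F}=0$ whenever $i\ne j$, which is Theorem C.

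In this organisation the main difficulty lies not in Theorem C itself but in its two inputs. The hard theorem is Theorem B, which is proved in this paper; the remaining steps — the isogeny reduction, and checking that the case $w=1$ of Theorem B is precisely the hypothesis demanded in \cite{Du} — are bookkeeping. Within \cite{Du}, the point I expect to be delicate, and the reason the statement is restricted to type $A$, is the control of cohomological degrees when relating $\bfX(wF)$ to the full-twist varieties (via Deligne--Lusztig induction and the geometry of the $d$-split Levi): such comparisons can a priori spread a module concentrated in one degree over a range of degrees, and it is the special geometry of the varieties attached to regular elements in type $A$ that keeps this in check, so that the clean ``full twist'' behaviour provided by Theorem B propagates to all of its $d$-th roots.
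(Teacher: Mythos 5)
Your proposal follows essentially the same route as the paper: reduce by braid conjugacy (Theorem A) to a specific $d$-th root, observe that Theorem B with trivial braid term gives the concentration statement \eqref{eq:xpi} for the full-twist varieties (of $\mathrm{GL}_n$ and, as needed, of its Levi subgroups), and feed this into the conditional computation of \cite{Du} (this is exactly \cite[Cor.~3.2]{Du}, whose hypothesis is \cite[Conj.~1]{Du}). The paper's proof is precisely this combination, so on the main line your argument matches.

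There is one step you pass over too quickly. Writing ``by Theorem A I may replace $wF$ by any element of its braid conjugacy class'' only helps if you know that the \emph{arbitrary} $d$-th root $\bfw$ in the statement is braid-conjugate to the specific root whose variety is analysed in \cite{Du}; conjugacy invariance by itself does not give this. The paper supplies the missing input by quoting Gonz\'alez-Meneses \cite[Thm.~1.1]{GM}: in the braid group, $d$-th roots of a given element form a single conjugacy class. This is a genuinely nontrivial theorem and is what makes the reduction to one representative legitimate; your text asserts goodness of $w$ (which is immediate from $\bfw^d=\bpi$) but never addresses uniqueness of the root up to conjugacy. Relatedly, the paper also records, via \cite[Thm.~3.12]{BM}, that the image of $\bfw$ in $W$ is a regular element, so that $d$ is a regular number (a divisor of $n$ or $n-1$); this is what guarantees one is in the situation actually covered by \cite[Cor.~3.2]{Du}, and it deserves an explicit mention rather than being folded into ``$w$ is a good regular element.'' With these two citations added, your argument coincides with the paper's proof; the side remarks in your first paragraph (the $\bfG^F\rtimes\langle F\rangle$-structure, $\ell(w)=2N/d$, the isogeny reduction) are harmless but not needed for Theorem C as stated.
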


\bigskip

The paper in organised as follows. Section 1 is devoted to the derived category of 
equivariant sheaves on the double flag variety. We recall the definition of standard objects attached to braid group elements and how they behave under convolution. We treat the particular case of the object attached to the full twist $\bpi$. In Section 2 we define the induction functor $\Ind_F$ as well as its variant $\Indu_F$. We state the properties of these functors that we will need in our application. The relation between the cohomology of Deligne--Lusztig varieties is made in Section 3. The last section is devoted to proving our three main results.

\bigskip

\noindent \textbf{General notation.} 
We fix a prime number $p$ and denote by $\bbF_{p}$ the finite field with $p$ elements. We fix an algebraic closure $\overline{\bbF}_p$ of $\bbF_p$. We also fix a prime number $l$ different from $p$, and we denote by $\bbK$ a field which is an algebraic extension of the field $\bbQ_l$ of $l$-adic numbers. All varieties will be defined over $\overline{\bbF}_p$ and all sheaves will be constructible $\bbK$-sheaves. Given $\bfX$ a variety,
we denote by $\bbK_\bfX$ the constant sheaf on $\bfX$ with value $\bbK$. We denote by $R\Gamma_c(\bfX)$ the complex of $l$-adic cohomology with compact support with coefficients in $\bbK$, and by $H_c^i(\bfX)$ its $i$-th cohomology group. 

\bigskip

\section{The equivariant derived category $\scrD$}\label{sec:hecke}

\subsection{Sheaves on the flag variety}\label{ssec:flag}

We refer to \cite[\S 3]{Bez-Yun} for basic results recalled in this section.
Let $\bfG$ be a connected reductive group over $\overline{\bbF}_p$ and $\flag$ be its
flag variety, a smooth projective variety. We denote by $N$ its dimension.

Let
$W$ be the Weyl group of $\bfG$, defined as the parameter set of orbits of 
$\bfG$ in its diagonal action on $\flag \times \flag$. The orbit $\bfO(w)$ associated
with $w\in W$ has dimension $N+\ell(w)$. The orbit $\bfO(1)$ is the diagonal and
$S=\{s\in W\ |\ \dim\bfO(s)=N+1\}$ is the set of simple reflections of $W$.
 Given $w,w',w''\in W$ we have $ww'=w''$
and $\ell(w)+\ell(w')=\ell(w'')$ if and only if the map
$(B_1,B_2,B_3)\mapsto (B_1,B_3)$, defines an isomorphism
$$\tau_{w,w'}:\bfO(w)\times_{\flag}\bfO(w')\simto \bfO(w'').$$

Note that the orbit $\bfO(w_0)$ corresponding to the longest element 
in $W$ is dense in $\flag \times \flag$ and $N = \ell(w_0)$. 

\medskip

We denote by $\scrD$ the bounded $\bfG$-equivariant derived category 
$\scrD = D_\bfG^b(\flag \times \flag)$. Given $w\in W$, let
$j_w : \bfO(w) \hookrightarrow \flag\times\flag$ be the inclusion map.
Consider the following objects in $\scrD$
  $$ \begin{aligned}
	\Delta(w) &\, =  (j_w)_! \bbK_{\bfO(w)}[\ell(w)+N], \\
	\nabla(w) &\, =  (j_w)_* \bbK_{\bfO(w)}[\ell(w)+N],\\
	L(w) &\, =  (j_w)_{!*} \bbK_{\bfO(w)}[\ell(w)+N].
  \end{aligned}$$
Since $j_w$ is an affine embedding they are $\bfG$-equivariant perverse sheaves.

\smallskip

The category $\scrD$ is endowed with a convolution. Given $\calF,\calG \in \scrD$, the convolution of $\calF$ with $\calG$ is defined by 
  $$ \calF \odot \calG = (p_{13})_!(p_{12}^* \calF \otimes p_{23}^* \calG)[-N],$$
where $p_{ij} : \flag\times \flag \times \flag \longrightarrow \flag \times \flag$, $(B_1,B_2,B_3) \mapsto (B_i,B_j)$. The convolution $\odot$ endows $\scrD$ with a structure of monoidal category with unit object $\D(1)$.

\smallskip

Given $w$ and $w'$ two elements of $W$ such that $\ell(ww') = \ell(w) + \ell(w')$,
the isomorphism $\tau_{w,w'}$ induces canonical isomorphisms
  \begin{equation}\label{eq:mult}
	\Delta(w) \odot \Delta(w') \longiso \Delta(ww') \quad \text{and} 
	\quad \nabla(w) \odot \nabla(w') \longiso \nabla(ww').
  \end{equation} 
In addition, we have canonical isomorphisms (cf e.g. \cite[\S 11.1]{Rou})
  \begin{equation}\label{eq:inv}
	\Delta(w) \odot \nabla(w^{-1}) \longiso \Delta(1) \isolong \nabla(w^{-1}) \odot \Delta(w).
  \end{equation}

\subsection{Standard objects attached to braids}\label{ssec:standard} 

We fix a set $\bfW$ in bijection with $W$ via $\bfw \mapsto w$.

The Artin--Tits braid monoid $B_W^+$ is the quotient of the free monoid on $\bfW$ by the relations $\bfw = \bfw_1 \bfw_2$ whenever $w = w_1 w_2$ in $W$ and $\ell(w) = \ell(w_1) + \ell(w_2)$. 

The braid group $B_W$ is defined as the group of fractions of $B_W^+$. The length on $W$ extends to a morphism of groups $\ell : B_W \longrightarrow \bbZ$ such that $\ell(\bfw) = \ell(w)$ for every $w\in W$.

Let $\mathbf{S}$ be the set of elements of $\bfW$ corresponding to $S$ under the bijection $\bfW \simto W$. The braid monoid $B_W^+$ has the classical presentation by braid generators and braid relations:
  $$B_W^+ = \big\langle \mathbf{s} \in \mathbf{S} \, | \, \underbrace{\mathbf{sts} \cdots}_{m_{st}} = \underbrace{\mathbf{tst} \cdots}_{m_{st}} \big\rangle$$
where $m_{st}$ is the order of $st$ in $W$ for $s,t \in S$.

\smallskip

As in \cite[2.1.2]{DMR06} we consider an enlarged version $\underline{B}_W^+$ of $B^+_W$ obtained by adding generators $\underline{\bfw}$ for every $\bfw \in \bfW$. These are subject to the following relations, for every $\bfw,\bfw' \in \bfW$:
  \begin{itemize}
	 \item $\underline{\bfw} \, \underline{\bfw'} = \underline{\mathbf{ww'}}$ if
		 no element of $\mathbf{S}$ occurs both in a decomposition of
		  $\bfw$ and $\bfw'$ (note that in this case $\mathbf{ww'} \in \bfW$),
	 \item $\bfw \underline{\bfw'} = \underline{\bfw'} \bfw$ if $wv = vw$ in $W$ and $\ell(wv) = \ell(w) + \ell(v)$ for all $v \le w'$ 
 (in particular, $\mathbf{ww}' = \bfw'\bfw$ as elements in $\bfW$).  
  \end{itemize}
The length function on $B_W^+$ extends to a morphism of monoids 
$\ell:\underline{B}_W^+\to\bbZ_{\ge 0}$ given by $\ell(\underline{\bfw})=l(w)$ for
$w\in W$.

\smallskip

Given $\bfw \in \bfW$ we set $\bfO(\bfw) = \bfO(w)$ and $\bfO(\underline{\bfw}) = \overline{\bfO(w)}$. More generally, given $\bfx = (\bfx_1,\ldots,\bfx_r) \in (\bfW \cup \underline{\bfW})^r$ we set
  $$\bfO(\bfx) = \bfO(\bfx_1) \times_{\flag} \bfO(\bfx_2) \times_{\flag} \cdots \times_{\flag} \bfO(\bfx_r).$$
Following \cite{De, BM,DMR06} we can associate a variety $\bfO(\bfb)$ to every element $\bfb \in \underline{B}_W^+$. It is defined as the projective limit of the varieties $\bfO(\bfx)$ where $\bfx$ runs over the sequences of elements of $\bfW \cup \underline{\bfW}$ with product $\bfb = \bfx_1 \cdots \bfx_r$. In addition, the first and last projection on $\flag$ yield a morphism $j_\bfb : \bfO(\bfb) \longrightarrow \flag \times \flag$ from which one can define the objects $\Delta(\bfb)$ and $\nabla(\bfb)$ as in \S\ref{ssec:flag}. By construction we get canonical isomorphisms
  \begin{equation}\label{eq:multbis}
	\Delta(\bfb) \odot \Delta(\bfb') \longiso \Delta(\bfb\bfb') \quad \text{and} \quad \nabla(\bfb) \odot \nabla(\bfb') \longiso \nabla(\mathbf{bb}')
  \end{equation}
for every $\bfb,\bfb' \in \underline{B}_W^+$ which generalize the isomorphisms \eqref{eq:mult}. 

\smallskip

Let $\bfb \longmapsto \bfb^*$ be the anti-involution on $B_W^+$ that is the identity
on $\mathbf{S}$. It lifts 
the anti-involution $w \longmapsto w^{-1}$ on $W$. More concretely, if $\bfb= \mathbf{s}_1 \cdots \mathbf{s}_r$ is a decomposition into elements in $\mathbf{S}$ then $\bfb^* = \mathbf{s}_r \cdots \mathbf{s}_1$. The isomorphisms \eqref{eq:inv} extend to isomorphisms
  \begin{equation}\label{eq:invbis}
	\Delta(\bfb) \odot \nabla(\bfb^*) \longiso \Delta(1) \isolong \nabla(\bfb^*) \odot \Delta(\bfb)
  \end{equation}
for every $\bfb \in B_W^+$. This allows to define an object $\Delta(\bfb)$ in $\scrD$ associated to any element $\bfb$ in the braid group $B_W$ satisfying $\Delta(\bfb^{-1}) \simeq \nabla(\bfb^*)$ whenever $\bfb \in B_W^+$. We also have canonical isomorphisms $\D(\bfb) \odot \D(\bfc) \simto \D(\bfb\bfc)$ for all $\bfb$, $\bfc \in B_W$. Note that the isomorphisms \eqref{eq:invbis} do not hold for elements in the enriched monoid $\underline{B}^+_W$ in general. 

\smallskip

Given $w \in W$, the convolution on the left by $\Delta(w)$ and $\nabla(w^{-1})$ are 
inverse to each other by~(\ref{eq:inv}). We deduce that for any 
$\calF,\calG \in \scrD$ and $\bfb \in B_W$ we have 
  \begin{equation}\label{eq:duality}
	 \Hom_\scrD(\Delta(\bfb) \odot \calF,\calG) \simeq \Hom_\scrD(\calF, \Delta(\bfb^{-1}) \odot \calG).
  \end{equation} 
Note that the same holds for the convolution on the right.  

\subsection{Two-sided cells}
We denote by $\preccurlyeq$ the two-sided preorder on $W$ defined by Kazhdan--Lusztig~\cite{KL} (and denoted by $\mathop{\leqq}\limits_{^{LR}}$ there). We write $w \sim w'$ if $w \preccurlyeq w'$ and $w' \preccurlyeq w$. The two-sided cells of $W$ are by definition the equivalence classes for this relation. The preorder induces an order on two-sided cells which we will still denote by $\preccurlyeq$. Recall that $\{1\}$ and $\{w_0\}$ are two-sided cells and that $\{1\}$ is the maximal one while $\{w_0\}$ is the minimal one for $\preccurlyeq$. We will also write $w \prec w'$ if $w \preccurlyeq w'$ and $w \not\sim w'$. Following \cite[\S2]{lusztig affine} one can attach a numerical invariant $a_\Gamma \in \bbZ_{\geq 0}$ to any two-sided cell $\Gamma$ of $W$. It satisfies $a_{\Gamma'} \le a_{\Gamma}$ whenever $\Gamma \preccurlyeq \Gamma'$ (see \cite[Th. 5.4]{lusztig affine}). We have $a_{\{1\}} = 0$ and $a_{\{w_0\}} = N$.

\subsection{Filtration, action of $\Delta(\bpi)$} 
Given a two-sided cell $\Gamma$ of $W$ we can form the following thick subcategories $\scrD_{\preccurlyeq \Gamma}$ and $\scrD_{\prec \Gamma}$ of $\scrD = D_\bfG^b(\flag \times \flag)$: an object $\calF$ of $\scrD$ belongs to $\scrD_{\preccurlyeq \Gamma}$ (resp. $\scrD_{\prec \Gamma}$) if all the composition factors of all the perverse sheaves $\lexp{p}{H}^i(\calF)$ are of the form $L(w)$ for some $w \preccurlyeq \Gamma$ (resp. $w \prec \Gamma$). These categories are stable under convolution by any element of $\scrD$ on the left or on the right \cite[Lemma 1.4 (b)]{lusztig truncated}.
In other words, the categories $\scrD_{\preccurlyeq \Gamma}$, $\scrD_{\prec \Gamma}$ and $\scrD_{\preccurlyeq \Gamma}/\scrD_{\prec \Gamma}$ have a structure of bimodule category over $\scrD$ for the convolution.

\smallskip

Given $\bfb \in B_W$, the convolution by $\Delta(\bfb)$ induces a self-equivalence which respects the filtration by two-sided cells. Consequently, it induces a self-equivalence of monoidal categories
  $$ \begin{array}{rcl} 
	\scrD_{\preccurlyeq \Gamma}/\scrD_{\prec \Gamma} & \mathop{\longrightarrow}\limits^\sim & \scrD_{\preccurlyeq \Gamma}/\scrD_{\prec \Gamma}\\[4pt]
	\calF & \longmapsto & \Delta(\bfb) \odot \calF. 
  \end{array}$$
Let $\mathbf{w_0}$ be the lift in $B_W^+$ of the longest element in $W$ and let
$\bpi = (\mathbf{w_0})^2$. The element $\bpi$ is a central element of $B_W$ called the \emph{full twist}. The following theorem \cite[Remark 4.3]{charDmod}
describes the action of $\Delta(\bpi)$.

\begin{theorem}[Bezrukavnikov--Finkelberg--Ostrik]\label{thm:actionofpi}
	The functor induced by $ \Delta(\bpi) \odot -$ on
	$\scrD_{\preccurlyeq \Gamma}/\scrD_{\prec \Gamma}$ is isomorphic to the shift
	functor $[-2a_{\Gamma}]$.
\end{theorem}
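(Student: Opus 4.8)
The plan is to argue as in \cite{charDmod}. Fix a two-sided cell $\Gamma$ and write $\mathbf{J}_\Gamma := \scrD_{\preccurlyeq \Gamma}/\scrD_{\prec \Gamma}$. Since $\bpi$ is central in $B_W$, the object $\Delta(\bpi)$ is central for $\odot$ (one has $\Delta(\bpi)\odot\Delta(\bfb)\simeq\Delta(\bpi\bfb)=\Delta(\bfb\bpi)\simeq\Delta(\bfb)\odot\Delta(\bpi)$ for $\bfb\in B_W$, and the $\Delta(w)$ generate $\scrD$), so the self-equivalence $\Phi := \Delta(\bpi)\odot -$ of $\scrD$ commutes, as a functor, with convolution on either side by an arbitrary object of $\scrD$. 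By stability of $\scrD_{\preccurlyeq\Gamma}$ and $\scrD_{\prec\Gamma}$ under convolution, $\Phi$ induces a self-equivalence $\bar\Phi_\Gamma$ of $\mathbf{J}_\Gamma$, which is again a $\scrD$-bimodule endofunctor and commutes with $[1]$; so is the shift $[-2a_\Gamma]$. Now $\mathbf{J}_\Gamma$ is monoidal --- for Lusztig's truncated convolution, the categorification of the asymptotic algebra $J_\Gamma$ --- with unit $\mathbf{1}_\Gamma = \bigoplus_d L(d)$, the sum over the Duflo involutions $d \in \Gamma$, and $\mathbf{1}_\Gamma$ generates $\mathbf{J}_\Gamma$ as a $\scrD$-bimodule category (in $J_\Gamma$, a direct sum of matrix rings over the attached finite groups, every basis element is obtained from the idempotents $t_d$ by multiplication on the left and on the right by elements of $\Gamma$). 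Hence a $\scrD$-bimodule endofunctor of $\mathbf{J}_\Gamma$ isomorphic to the identity on $\mathbf{1}_\Gamma$ is isomorphic to the identity; applying this to $\bar\Phi_\Gamma\circ[2a_\Gamma]$, the theorem follows once we produce an isomorphism
  $$\Delta(\bpi)\odot\mathbf{1}_\Gamma\ \simeq\ \mathbf{1}_\Gamma[-2a_\Gamma]\qquad\text{in }\mathbf{J}_\Gamma,$$
equivalently $\Delta(\bpi)\odot L(d)\simeq L(d)[-2a_\Gamma]$ in $\mathbf{J}_\Gamma$ for one (hence every) Duflo involution $d\in\Gamma$.

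This is a geometric statement. Writing $\Delta(\bpi)=\Delta(\mathbf{w_0})\odot\Delta(\mathbf{w_0})$ with $\Delta(\mathbf{w_0})=(j_{w_0})_!\,\bbK_{\bfO(w_0)}[2N]$ and setting $\widetilde U=\{(B_1,B_2,B_3)\ |\ (B_1,B_2)\in\bfO(w_0)\}\subset\flag\times\flag\times\flag$, one unwinds the definition of $\odot$ and the projection formula to get, for every $\calF\in\scrD$, an isomorphism $\Delta(\mathbf{w_0})\odot\calF\simeq q_!\,(r^*\calF)[N]$, where $r,q\colon\widetilde U\to\flag\times\flag$ send $(B_1,B_2,B_3)$ to $(B_2,B_3)$ and to $(B_1,B_3)$ respectively; here $q$ is a Zariski-locally trivial fibration whose fibre is the big cell $\{B_2\ |\ (B_1,B_2)\in\bfO(w_0)\}\simeq\mathbb{A}^N$, which does not depend on $B_3$. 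Feeding $\calF=L(d)$ into this twice, the assertion becomes a computation of compactly supported cohomology of restrictions of (shifts of) intersection cohomology complexes of Schubert varieties to such big cells, read in $\mathbf{J}_\Gamma$, i.e. after discarding all composition factors $L(u)$ with $u\prec\Gamma$. In the extreme case $\Gamma=\{w_0\}$, where $a_{\{w_0\}}=N$, $\scrD_{\prec\{w_0\}}=0$, $d=w_0$ and $L(w_0)=\bbK_{\flag\times\flag}[2N]$, this is immediate: $q_!\,\bbK_{\widetilde U}\simeq\bbK_{\flag\times\flag}[-2N]$ because $R\Gamma_c(\mathbb{A}^N)=\bbK[-2N]$, so $\Delta(\mathbf{w_0})\odot L(w_0)\simeq L(w_0)[-N]$ and therefore $\Delta(\bpi)\odot L(w_0)\simeq L(w_0)[-2N]=L(w_0)[-2a_{\{w_0\}}]$, as predicted.

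The main obstacle is this computation for a general cell: one must show that, after passing to $\mathbf{J}_\Gamma$, what survives of $\Delta(\bpi)\odot\mathbf{1}_\Gamma$ is $\mathbf{1}_\Gamma$ shifted by \emph{exactly} $-2a_\Gamma$, with no surviving self-extension and no twist by a nontrivial invertible object of $\mathbf{J}_\Gamma$. Two points need control: that $\bar\Phi_\Gamma$, which a priori is only perverse right-exact (being built from the right-exact functors $\Delta(\mathbf{w_0})\odot-$), becomes \emph{exact} on the subquotient; and that the maximal cohomological degree in which the relevant intersection cohomology survives in $\mathbf{J}_\Gamma$ is precisely $a_\Gamma$, the maximum of the degrees of the Kazhdan--Lusztig polynomials attached to elements of $\Gamma$, this maximum being attained and contributing exactly one copy of $L(d)$ at a Duflo involution $d$. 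In \cite{charDmod} both points are obtained at once by identifying $\mathbf{J}_\Gamma$ with a category of equivariant coherent sheaves on a variety equipped with a $\mathbb{G}_m$-action under which the full twist acts through the scaling $\mathbb{G}_m$ of weight $2a_\Gamma$, so that $\bar\Phi_\Gamma$ becomes literally a weight shift --- which makes both its exactness and the value $2a_\Gamma$ manifest. (As a check on the value, $2a_\Gamma$ is the socle degree of the Gorenstein graded algebra $\Hom^\bullet_{\mathbf{J}_\Gamma}(L(d),L(d))$, which for $\Gamma=\{w_0\}$ is $H^\bullet_\bfG(\flag\times\flag)$.)
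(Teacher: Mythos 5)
The paper does not actually prove this statement: it is quoted from \cite[Remark 4.3]{charDmod}, and the only thing the paper adds is the illustrative Example for the two extreme cells. Your proposal is in the same position. The part of your argument that is genuinely verified is the case $\Gamma=\{w_0\}$ (your computation $\Delta(\mathbf{w_0})\odot L(w_0)\simeq L(w_0)[-N]$ agrees with the paper's Example (ii)), while the decisive general statement --- that on an arbitrary cell the surviving part of $\Delta(\bpi)\odot\mathbf{1}_\Gamma$ is $\mathbf{1}_\Gamma$ shifted by exactly $-2a_\Gamma$, with exactness and no twist by a nontrivial invertible object --- is not proved but referred back to the coherent realization in \cite{charDmod}. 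So what you have written is a reduction strategy plus the same citation the paper uses, not an independent proof; that is a genuine gap relative to the claim of giving a proof.

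Moreover the reduction itself has two soft spots you should not gloss over. First, in the Verdier quotient $\scrD_{\preccurlyeq\Gamma}/\scrD_{\prec\Gamma}$ equipped with the \emph{descended} convolution $\odot$, the object $\mathbf{1}_\Gamma=\bigoplus_d L(d)$ is not a unit: Lusztig's truncated convolution is obtained from $\odot$ by extracting a specific perverse degree, and $\calF\odot\mathbf{1}_\Gamma$ in the quotient is in general a direct sum of shifts of $\calF$, not $\calF$. Hence the step ``a $\scrD$-bimodule endofunctor which is the identity on $\mathbf{1}_\Gamma$ is isomorphic to the identity'' does not follow from monoidality as you state it; likewise your generation claim is a statement about the asymptotic algebra $J_\Gamma$, i.e.\ decategorified, and lifting it to the triangulated quotient (together with the naturality data needed to upgrade an isomorphism of objects $\bar\Phi_\Gamma(\mathbf{1}_\Gamma)\simeq\mathbf{1}_\Gamma[-2a_\Gamma]$ to an isomorphism of functors) requires an argument. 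Second, the centrality you invoke needs more than the isomorphisms $\Delta(\bpi)\odot\Delta(\bfb)\simeq\Delta(\bfb)\odot\Delta(\bpi)$ on braid objects: to say that $\Delta(\bpi)\odot-$ commutes with left convolution by an \emph{arbitrary} object of $\scrD$ you need a central structure, i.e.\ isomorphisms $\Delta(\bpi)\odot\calG\simeq\calG\odot\Delta(\bpi)$ natural in $\calG$, which is true but must be constructed. None of this is fatal --- it is essentially how \cite{charDmod} proceeds --- but as written the proposal neither closes these steps nor replaces the key computation, so it does not go beyond the paper's own treatment (a citation) except for the $\{w_0\}$ check.
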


\begin{example}
(i) Let $\Gamma =\{1\}$ be the highest two-sided cell. The thick subcategory
	 $\scrD_{\prec \Gamma}$ contains $L(w)$ for $w\neq 1$. Given $s \in S$, the perverse sheaf $\Delta(s)$ is an extension of $L(s)$ by $L(1) = \Delta(1)$. Therefore $\Delta(s) \simeq L(1)$ in $\scrD/\scrD_{\prec \Gamma}$ and consequently
  $$\Delta(\bpi) \odot L(1) \simeq \Delta(\bpi) \simeq L(1) \quad \text{in } \ \scrD/\scrD_{\prec \Gamma}.$$
 
\noindent (ii) Let $\Gamma =\{w_0\}$. The perverse sheaf $L(w_0)$ is the constant sheaf on $\flag \times \flag$, shifted by $2N$. Let $j : \bfO(w_0)\times \flag \hookrightarrow \flag^3$ be the inclusion map. We have 
  $$p_{12}^* \Delta(w_0) \otimes p_{23}^* L(w_0) \simeq j_! \bbK_{\bfO(w_0)\times \flag}[4N].$$
The restriction of $p_{13}$ to $\bfO(w_0)\times \flag$ has fibers isomorphic to an affine space of dimension $N$, therefore $(p_{13})_! j_! \bbK_{\bfO(w_0)\times \flag} \simeq \bbK_{\flag\times \flag}[-2N]$ and we deduce that
  $$\Delta(w_0) \odot L(w_0) \simeq \bbK_{\flag\times \flag}[4N-2N-N] = L(w_0)[-N].$$
We recover the fact that $\Delta(\bpi) \odot L(w_0) \simeq L(w_0)[-2N]$. 
\end{example}

We will use this theorem for objects $\calF \in  \scrD_{\preccurlyeq \Gamma}$ which lie in the (right) orthogonal of $\scrD_{\prec \Gamma}$.

\begin{corollary}\label{cor:actionoftpi}
	Let $\calF \in \scrD_{\preccurlyeq \Gamma}$. Assume that $\Hom_\scrD(\calG, \calF) = 0$ for every $\calG \in \scrD_{\prec \Gamma}$. We have 
	  $$ \Delta(\bpi) \odot \calF \simeq \calF[-2a_{\Gamma}]$$
	in $\scrD_{\preccurlyeq \Gamma}$.
\end{corollary}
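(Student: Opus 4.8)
The plan is to deduce Corollary~\ref{cor:actionoftpi} from Theorem~\ref{thm:actionofpi} by showing that the canonical map relating $\calF$ to its image in the subquotient $\scrD_{\preccurlyeq\Gamma}/\scrD_{\prec\Gamma}$ is compatible with the action of $\Delta(\bpi)$, and that the hypothesis forces this map to be an isomorphism after applying $\Delta(\bpi)\odot-$. Concretely, let $Q:\scrD_{\preccurlyeq\Gamma}\to\scrD_{\preccurlyeq\Gamma}/\scrD_{\prec\Gamma}$ be the Verdier quotient functor. By Theorem~\ref{thm:actionofpi} there is an isomorphism $Q(\Delta(\bpi)\odot\calF)\simeq Q(\calF)[-2a_\Gamma]$ in the quotient category. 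The point is to lift this to an isomorphism in $\scrD_{\preccurlyeq\Gamma}$ itself, using the orthogonality hypothesis on $\calF$.

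First I would recall that, since $\scrD_{\prec\Gamma}$ is a thick subcategory of the triangulated category $\scrD_{\preccurlyeq\Gamma}$, for any two objects $\calF,\calF'$ of $\scrD_{\preccurlyeq\Gamma}$ the morphisms in the quotient are computed by a calculus of fractions. The hypothesis $\Hom_\scrD(\calG,\calF)=0$ for all $\calG\in\scrD_{\prec\Gamma}$ means $\calF$ lies in the right orthogonal ${}^\perp\!\scrD_{\prec\Gamma}$ inside $\scrD_{\preccurlyeq\Gamma}$; a standard fact (localization theory for triangulated categories) is that the quotient functor $Q$ restricted to this right-orthogonal subcategory is fully faithful, so that $\Hom_{\scrD_{\preccurlyeq\Gamma}}(\calF',\calF)\xrightarrow{\ \sim\ }\Hom_{\scrD/\scrD_{\prec\Gamma}}(Q\calF',Q\calF)$ for all $\calF'\in\scrD_{\preccurlyeq\Gamma}$. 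I would then check that $\Delta(\bpi)\odot\calF$ also lies in this right-orthogonal subcategory: indeed for $\calG\in\scrD_{\prec\Gamma}$, by~\eqref{eq:duality} we have $\Hom_\scrD(\calG,\Delta(\bpi)\odot\calF)\simeq\Hom_\scrD(\Delta(\bpi)^{-1}\odot\calG,\calF)$, and $\Delta(\bpi)^{-1}\odot\calG\in\scrD_{\prec\Gamma}$ because $\scrD_{\prec\Gamma}$ is stable under convolution by any object of $\scrD$; hence this $\Hom$ vanishes. The same computation applies to all shifts $\calF[n]$, so both $\calF$ and $\Delta(\bpi)\odot\calF$ are objects on which $Q$ is fully faithful.

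The conclusion is then immediate: the isomorphism $Q(\Delta(\bpi)\odot\calF)\simeq Q(\calF[-2a_\Gamma])=Q(\calF)[-2a_\Gamma]=Q(\calF[-2a_\Gamma])$ furnished by Theorem~\ref{thm:actionofpi} is an isomorphism in $\Hom_{\scrD/\scrD_{\prec\Gamma}}(Q(\Delta(\bpi)\odot\calF),Q(\calF[-2a_\Gamma]))$, and since both $\Delta(\bpi)\odot\calF$ and $\calF[-2a_\Gamma]$ lie in the right-orthogonal subcategory where $Q$ is fully faithful, this isomorphism comes from a (unique) isomorphism $\Delta(\bpi)\odot\calF\simeq\calF[-2a_\Gamma]$ in $\scrD_{\preccurlyeq\Gamma}$, as desired.

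I expect the only genuinely delicate point to be the bookkeeping around full faithfulness of the quotient functor on the right-orthogonal subcategory. One must be a little careful that $\scrD_{\preccurlyeq\Gamma}$ is honestly a triangulated subcategory of $\scrD$ (so that Verdier localization applies) and that the relevant orthogonality is the one matching the direction of the calculus of fractions; the hypothesis is stated as $\Hom_\scrD(\calG,\calF)=0$, i.e.\ $\calF$ kills $\scrD_{\prec\Gamma}$ on the left, which is exactly what is needed for a morphism $Q\calF'\to Q\calF$ represented by a left fraction to be replaceable by an honest morphism. Everything else — the stability of $\scrD_{\prec\Gamma}$ under convolution, the adjunction~\eqref{eq:duality}, the invertibility of $\Delta(\bpi)$ — has already been recorded above.
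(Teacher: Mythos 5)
Your argument is correct and is essentially the paper's own proof: the paper likewise deduces the statement from Theorem \ref{thm:actionofpi} via the fact (citing \cite[Ex.~10.15(ii)]{KS}) that under the orthogonality hypothesis the quotient functor induces an isomorphism $\Hom_{\scrD_{\preccurlyeq\Gamma}}(-,\calF)\simto\Hom_{\scrD_{\preccurlyeq\Gamma}/\scrD_{\prec\Gamma}}(-,\calF)$. Your extra verification that $\Delta(\bpi)\odot\calF$ also lies in the right orthogonal of $\scrD_{\prec\Gamma}$ (via \eqref{eq:duality} and the stability of $\scrD_{\prec\Gamma}$ under convolution), so that the isomorphism in the quotient lifts to one in $\scrD_{\preccurlyeq\Gamma}$, is precisely the detail the paper leaves implicit.
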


\begin{proof}
With the assumptions on $\calF$, the quotient functor $\scrD_{\preccurlyeq \Gamma} \rightarrow \scrD_{\preccurlyeq \Gamma}/\scrD_{\prec \Gamma}$ induces an isomorphism of functors $\Hom_{\scrD_{\preccurlyeq \Gamma}}(-,\calF) \simto \Hom_{\scrD_{\preccurlyeq \Gamma}/\scrD_{\prec \Gamma}}(-,\calF)$ (see for example \cite[Ex. 10.15(ii)]{KS}). Consequently the corollary follows from Theorem \ref{thm:actionofpi}.
\end{proof}

\section{Sheaves on $\bfG F$}
Throughout this section we fix an endomorphism $F$ of $\bfG$ such that some power $F^\delta$ of $F$ is a Frobenius endomorphism defining an $\bbF_q$-structure on $\bfG$.
The finite group $\bfG^F$ is a finite reductive group. Note that $F$ acts on $\flag$, on $W$ and on $B_W$.

\subsection{Induction and restriction}
The group $\bfG$ acts by conjugation on the coset $\bfG F$ (in $\bfG \rtimes \langle F \rangle$) by $h (gF)h^{-1} = hgF(h^{-1}) F$. Let $D_\bfG^b(\bfG F)$ be the bounded equivariant derived category of sheaves on the coset $\bfG F$ for this action. By the Lang--Steinberg theorem, the action of $\bfG$ on $\bfG F$ is transitive. Moreover, the stabilizer of $F$ for this action is the finite group $\bfG^F$. Therefore the functor
``fiber at $F$''
  \begin{equation}\label{eq:unipsheaves}
	\begin{array}{rcl} 
		D_\bfG^b(\bfG F) & \mathop{\longrightarrow}\limits^\sim & D^b(\bbK\bfG^F\text{-mod}) \\ 
		\calF & \longmapsto & \calF_F 
	\end{array}
  \end{equation}
gives an equivalence between $D_\bfG^b(\bfG F)$ and the bounded derived category of finite-dimensional representations of $\bfG^F$ over $\bbK$. In particular the category $D_\bfG^b(\bfG F)$ is semi-simple since $\bbK$ has characteristic zero.

\smallskip

Following \cite{Lu15} we define induction and restriction functors between $\scrD$ and $D_\bfG^b(\bfG F)$. Let us consider the following diagram
  $$\xymatrix{ & \ar[ld]_f \flag\times \bfG F \ar[rd]^\varpi& \\ 
  \flag \times \flag & & \bfG F }$$
where $f(B,gF) = (B,{}^{g} F(B))$ and $\varpi(B,gF) = gF$. Since $f$ and $\varpi$ are $\bfG$-equivariant morphisms they induce functors between the equivariant derived categories. If we set $\Ind_F = \varpi_! f^*$ and $\Res_F = f_* \varpi^!$ then we get an adjoint  pair of functors
  $$ \xymatrix{D^b_\bfG(\bfG F) \ar@/_1.0pc/[rr]_{\Res_F} \ar@/^1.0pc/@{<-}[rr]^{\Ind_F} & &  \scrD= D^b_\bfG(\flag\times \flag) .}$$
We denote by $\scrU$ the thick subcategory of $D_\bfG^b(\bfG F)$ generated by the image of $\Ind_F$. Under the equivalence $D_\bfG^b(\bfG F) \simeq D^b(\bbK\bfG^F\text{-mod})$, it corresponds to the bounded derived category of unipotent representations of $\bfG^F$ over $\bbK$. 

\subsection{Filtration}\label{sec:filtration}
>From now on we assume that $\bbK$ is large enough for $\bfG^F$, so that $\bbK\bfG^F$ is split semisimple. To a unipotent character $\rho$ of $\bfG^F$, Lusztig associates two invariants $a_\rho$ and $A_\rho$ (roughly speaking, they are the valuation and the degree 
of the polynomial in $q^{1/\delta}$ representing the dimension of $\rho$). The unipotent characters of $\bfG^F$ fall into families, which are in turn labeled by 
$F$-stable two-sided cells of $W$. The two-sided cell corresponding to a unipotent character $\rho$ will be denoted by $\Gamma_{\rho}$. 
Given a two-sided cell $\Gamma$ of $W$ we define $\scrU_{\preccurlyeq \Gamma}$  (resp. $\scrU_{\prec \Gamma}$) to be the thick subcategory of $D_\bfG^b(\bfG F)$
generated by the image under the inverse of \eqref{eq:unipsheaves} of the unipotent
$\bbK\bfG^F$-modules
$\rho$ such that $\Gamma_\rho \preccurlyeq \Gamma$ (resp. $\Gamma_\rho \prec \Gamma$). 

\smallskip

The following proposition is proved in \cite[Prop. 2.4(a)]{Lu15}. Note that our definition of $\Res_F$ differs from the one of Lusztig who uses $f_! \varpi^*$ instead. However, the two definitions are exchanged by Poincar\'e--Verdier duality, which preserves the filtrations on both $\scrD$ and $\scrU$.

\begin{proposition}\label{prop:compatibility}
	Let $\Gamma$ be an $F$-stable two-sided cell of $W$. The functors $\Ind_F$ and $\Res_F$ restrict to functors 
	  $$ \xymatrix{\scrU_{\preccurlyeq \Gamma} \ar@/_/[rr]_{\Res_F} \ar@/^/@{<-}[rr]^{\Ind_F} & &  \scrD_{\preccurlyeq \Gamma}} \quad 
	  \text{and} \quad \xymatrix{\scrU_{\prec \Gamma} \ar@/_/[rr]_{\Res_F} \ar@/^/@{<-}[rr]^{\Ind_F} & &  \scrD_{\prec \Gamma} .}$$
\end{proposition}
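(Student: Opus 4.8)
The plan is to deduce the proposition from its counterpart in \cite[Prop.~2.4(a)]{Lu15} by means of Poincar\'e--Verdier duality, along the lines indicated just before the statement. For $\Ind_F$ there is nothing to do: the functor $\Ind_F=\varpi_!f^*$ used here is the one considered by Lusztig, so the inclusions $\Ind_F(\scrD_{\preccurlyeq\Gamma})\subseteq\scrU_{\preccurlyeq\Gamma}$ and $\Ind_F(\scrD_{\prec\Gamma})\subseteq\scrU_{\prec\Gamma}$ are part of \cite[Prop.~2.4(a)]{Lu15}. The content of the argument is therefore entirely in the case of $\Res_F$.

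Let $\mathbb{D}$ denote Poincar\'e--Verdier duality, both on $\scrD$ and on $D^b_\bfG(\bfG F)$. Lusztig's restriction functor is $\Res_F^{\mathrm{Lu}}=f_!\varpi^*$, and \cite[Prop.~2.4(a)]{Lu15} gives $\Res_F^{\mathrm{Lu}}(\scrU_{\preccurlyeq\Gamma})\subseteq\scrD_{\preccurlyeq\Gamma}$ and $\Res_F^{\mathrm{Lu}}(\scrU_{\prec\Gamma})\subseteq\scrD_{\prec\Gamma}$. The interchange isomorphisms $\mathbb{D}f_!\simeq f_*\mathbb{D}$ and $\mathbb{D}\varpi^*\simeq\varpi^!\mathbb{D}$, together with $\mathbb{D}^2\simeq\mathrm{id}$, give a natural isomorphism $\Res_F\simeq\mathbb{D}\circ\Res_F^{\mathrm{Lu}}\circ\mathbb{D}$. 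So the statement for $\Res_F$ follows once I show that $\mathbb{D}$ preserves the two-sided-cell filtrations on $\scrD$ and on $\scrU$. (Should \cite{Lu15} record only the statement for $\Res_F^{\mathrm{Lu}}$ and not for $\Ind_F$, the latter is recovered by the same device: $\varpi$ is proper and $f$ is smooth, hence $\mathbb{D}\,\Ind_F\,\mathbb{D}\simeq\Ind_F$ up to a shift and a Tate twist.)

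That $\mathbb{D}$ preserves the filtration on $\scrD$ is formal: each $\bfO(w)$ is smooth, so $L(w)$ is self-dual up to a Tate twist, and $\mathbb{D}$ exchanges $\lexp{p}{H}^i$ with $\lexp{p}{H}^{-i}$; hence $\mathbb{D}$ preserves the composition factors of the perverse cohomology of any object, and so fixes $\scrD_{\preccurlyeq\Gamma}$ and $\scrD_{\prec\Gamma}$. On the $\scrU$ side one must transport $\mathbb{D}$ through the equivalence \eqref{eq:unipsheaves}. Since $\bfG F\simeq\bfG/\bfG^F$ is smooth of dimension $\dim\bfG$ (as $\bfG^F$ is finite), taking the fiber at $F$ identifies $\mathbb{D}$ with $M\mapsto M^\vee[2\dim\bfG](\dim\bfG)$, i.e. with the contragredient up to an overall shift and Tate twist. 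The contragredient of a unipotent character lies in the same family, and shifts and Tate twists do not change membership in $\scrU_{\preccurlyeq\Gamma}$ or $\scrU_{\prec\Gamma}$; hence $\mathbb{D}$ fixes these subcategories as well, and the proposition follows.

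The one place calling for care — there is no substantial difficulty beyond \cite{Lu15} — is the identification of $\mathbb{D}$ under \eqref{eq:unipsheaves} with the contragredient up to shift and twist, equivalently the computation of the $\bfG$-equivariant dualizing complex of the single orbit $\bfG F$, together with the fact that passing to the contragredient preserves the partition of unipotent characters into families. Both are standard, but should be spelled out.
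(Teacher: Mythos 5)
Your argument is exactly the paper's: the statement is quoted from \cite[Prop.~2.4(a)]{Lu15}, and the only point addressed is that the present $\Res_F=f_*\varpi^!$ is the Poincar\'e--Verdier dual of Lusztig's $f_!\varpi^*$, with duality preserving the cell filtrations on both $\scrD$ and $\scrU$. You merely spell out the details the paper leaves implicit (self-duality of the $L(w)$ up to twist, identification of duality with the contragredient under \eqref{eq:unipsheaves}, stability of families under contragredient), all of which are correct.
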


\begin{rmk}\label{rmk:avalue}
The map $w \longmapsto w w_0$ induces an order reversing bijection on families. As explained in \cite[\S 1.3]{Lu15} the map $\rho \longmapsto \Gamma_\rho$ differs from the original definition of Lusztig (see \cite[\S4]{LuBook}) by multiplication by $w_0$. In particular we have $a_{\rho} = a_{\Gamma_\rho w_0}$.

\smallskip 

As an example, let us consider the perverse sheaf $L(w_0)$ which is, up to a shift, the constant sheaf on $\flag \times \flag$. Its image by $\Ind_F$ has only
the constant sheaf  $\bbK_{\bfG F}$ as a composition factor. Under the equivalence $D_\bfG^b(\bfG F) \simeq D^b(\bbK\bfG^F\text{-mod})$, this constant sheaf
corresponds to the trivial unipotent character $1_{\bfG^F}$. With our previous notation this means that $\Gamma_{1_{\bfG^F}} = \{w_0\}$ is the lowest two-sided cell. Note that $a_{1_{\bfG^F}} = 0$ whereas $a_{\{w_0\}} = N$. 
\end{rmk}

\subsection{Further properties of the functor $\Ind_F$}\label{ssec:trace}
Given a variety $\bfX$ acted on by $\bfG$, we will denote by 
  $$\For_{\bfG^F}^\bfG : D_\bfG^b(\bfX) \longrightarrow D_{\bfG^F}^b(\bfX)$$
the forgetful functor. Let $\bfY$ be another variety which is only acted on by the finite group $\bfG^F$. We consider the induced variety $\bfX = \bfG \times_{\bfG^F} \bfY$ and the inclusion $\iota_\bfY : \bfY \hookrightarrow \bfX$. The action of $\bfG$ on $\bfX$ is given by left multiplication on the left factor and the map $\iota_\bfY$ is $\bfG^F$-equivariant. By \cite[\S 2.6.3]{BL} there is an equivalence of categories 
  $$  \iota_{\bfY}^* \circ \For_{\bfG^F}^\bfG  :   D_\bfG^b(\bfG \times_{\bfG^F} \bfY)  \longiso D_{\bfG^F}^b(\bfY).$$
When $\bfY = \{F\}$ we recover the equivalence \eqref{eq:unipsheaves}. 

\begin{proposition}\label{prop:ind-forget}
	Let $\iota_F :  \{F\} \hookrightarrow \bfG F$ and $\iota : \flag \hookrightarrow \flag \times \flag$ be the inclusion of the graph of $F$ in the double flag variety. There is an isomorphism of functors
    $$  \iota_{F}^* \circ \For_{\bfG^F}^\bfG \circ \Ind_F \, \mathop{\Longrightarrow}\limits^\sim \,  R\Gamma (\flag, \iota^* \circ \For_{\bfG^F}^\bfG(-)).$$
\end{proposition}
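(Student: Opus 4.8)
The plan is to unravel the definition $\Ind_F = \varpi_! f^*$ and evaluate everything after pulling back along $\iota_F$ and forgetting to the $\bfG^F$-equivariant derived category. Concretely, I would consider the commutative diagram
$$\xymatrix{
\flag \ar@{^{(}->}[r]^-{\tilde\iota} \ar[d]_{\iota} & \flag \times \bfG F \ar[d]^{f} \ar[r]^-{\varpi} & \bfG F \\
\flag \times \flag & & }$$
where $\tilde\iota : \flag \hookrightarrow \flag \times \bfG F$ is the map $B \mapsto (B,F)$ (so that $\varpi \circ \tilde\iota$ is the inclusion $\iota_F$ of $\{F\}$, up to identifying $\flag$ with the fiber $\varpi^{-1}(F)$), and $f \circ \tilde\iota = \iota$ since $f(B,F) = (B,{}^1F(B)) = (B,F(B))$ and the graph of $F$ is exactly $\{(B,F(B))\}$. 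Note that $\varpi^{-1}(F) \cong \flag$ via $\tilde\iota$, because the fiber of $\varpi$ over $gF$ is $\flag \times \{gF\}$.

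First I would use proper base change: since $\varpi$ is proper (being a base change of $\flag \to \mathrm{pt}$, as $\flag$ is projective), we have $\iota_F^* \varpi_! \cong R\Gamma_c(\varpi^{-1}(F), (-)|_{\varpi^{-1}(F)}) \cong R\Gamma(\flag, \tilde\iota^*(-))$, where the last identification again uses that $\flag$ is proper so compact-support and ordinary cohomology agree. Next, $\tilde\iota^* f^* = (f \circ \tilde\iota)^* = \iota^*$. Combining these two steps gives $\iota_F^* \circ \Ind_F \cong R\Gamma(\flag, \iota^*(-))$ at the level of the $\bfG$-equivariant categories; then one checks the forgetful functors $\For_{\bfG^F}^\bfG$ intertwine everything, which is automatic because all the morphisms $f$, $\varpi$, $\iota$, $\iota_F$, $\tilde\iota$ are $\bfG^F$-equivariant (indeed $\bfG$-equivariant) and forgetting the equivariance commutes with $!$- and $*$-pushforwards and pullbacks. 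This yields the stated isomorphism $\iota_F^* \circ \For_{\bfG^F}^\bfG \circ \Ind_F \xRightarrow{\sim} R\Gamma(\flag, \iota^* \circ \For_{\bfG^F}^\bfG(-))$.

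The main obstacle, and the place requiring the most care, is making the base-change square genuinely cartesian and tracking the $\bfG^F$-equivariant structures through it: one must verify that $\varpi^{-1}(\{F\}) \to \{F\}$ is exactly the pullback that identifies with $\flag \to \mathrm{pt}$ \emph{as $\bfG^F$-varieties} — here $\bfG^F = \mathrm{Stab}_\bfG(F)$ acts on $\varpi^{-1}(F) = \flag \times \{F\}$ through its action on $\flag$, which is the correct action — and that the proper base change isomorphism for $\varpi_!$ is compatible with the equivalence $\iota_F^* \circ \For_{\bfG^F}^\bfG : D_\bfG^b(\bfG F) \simto D_{\bfG^F}^b(\{F\})$ used implicitly on the target (and its analogue for $\flag$). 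Since $\For_{\bfG^F}^\bfG$ is exact, conservative, and commutes with all four operations, and proper base change is functorial, this compatibility is formal but must be spelled out. One should also note $\varpi$ is proper so $\varpi_! = \varpi_*$, which is what lets ordinary cohomology $R\Gamma$ appear rather than $R\Gamma_c$; alternatively, since $\flag$ is proper this distinction is immaterial. No deeper input is needed beyond proper base change and the generalities on equivariant derived categories from \cite{BL}.
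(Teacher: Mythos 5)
Your proof is correct and follows essentially the same route as the paper: the same auxiliary map $B\mapsto(B,F)$ into $\flag\times\bfG F$, base change for $\varpi_!$ along the cartesian square over $\{F\}$, the identity $f\circ\tilde\iota=\iota$, and commutation of the forgetful functor with the four operations. One small correction: the inclusions $\iota$, $\iota_F$, $\tilde\iota$ are only $\bfG^F$-equivariant (the graph of $F$ and the point $\{F\}$ are not $\bfG$-stable), so the base-change step cannot be carried out ``at the level of the $\bfG$-equivariant categories''; one must first apply $\For_{\bfG^F}^\bfG$ (using that it commutes with $\varpi_!$ and $f^*$) and then base change in $D^b_{\bfG^F}$, which is exactly the order of operations in the paper and the reason the forgetful functor appears in the statement.
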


\begin{proof}
	Let $\varpi':\flag\to\{F\}$ be the structure map and define
	$\iota':\flag\to \flag \times \bfG F$ by $\iota'(B)=(B,F)$.
There is a proper base change isomorphism
	$\iota_F^* \varpi_!  \mathop{\Longrightarrow}\limits^\sim {\varpi'}_! (\iota')^*$ and a canonical
	isomorphism $(\iota')^* f^*  \mathop{\Longrightarrow}\limits^\sim 
	(f \iota')^* = \iota^*$ making
the following diagram commutative. Note also that the right square is cartesian:
  $$\xymatrix{ & & \ar@{_(->}[dl]_{\iota'} \ar[dr]^{\varpi'} \ar@/_2.0pc/[ddll]_{\iota} \flag \\ 
  & \flag \times \bfG F \ar[dr]^\varpi \ar[dl]_f & & \{F\} \ar@{_(->}[dl]_{\iota_F} \\ 
  \flag \times\flag & & \bfG F}$$
Note that the forgetful functor commutes with the pull-back and push-forward functors.
So we have a canonical isomorphism $\For_{\bfG^F}^\bfG \circ \Ind_F 
\mathop{\Longrightarrow}\limits^\sim \varpi_!\circ f^*\circ \For_{\bfG^F}^\bfG$.
The proposition follows by composing those three isomorphisms.
\end{proof}

Proposition \ref{prop:ind-forget} shows that the induction functor $\Ind_F$ factors through the forgetful functor. From now on we will
work with $\bfG^F$-equivariant sheaves and the induction functor 
  $$\Indu_F = R\Gamma(\flag,  \iota^*(-)) : D_{\bfG^F}^b(\flag \times \flag) \longrightarrow D^b(\bbK\bfG^F\text{-mod}).$$
With this notation, Proposition \ref{prop:ind-forget} can be rephrased into the existence of a natural isomorphism
  \begin{equation}\label{eq:ind-stalk}
	\big(\Ind_F(\calF)\big)_F \longiso \Indu_F(\For_{\bfG^F}^\bfG \calF)
  \end{equation}
for all $\calF \in \scrD$. Unless there is a risk of confusion, we will continue to denote by $\Delta(\bfb)$ the image under the forgetful functor of the standard objects attached to elements of the braid group. In particular \eqref{eq:ind-stalk} will be written
  $$(\Ind_F  \Delta(\bfb))_F \longiso \Indu_F  \Delta(\bfb).$$
The convolution $\odot$ defined in \S\ref{ssec:flag} makes sense also in the  $\bfG^F$-equivariant derived category, and again we shall use the same notation. Finally, since
$F$ is $\bfG^F$-equivariant, it induces canonical isomorphisms 
  \begin{equation}\label{eq:action of F}
	F^*(\D(\bfb)) \longiso \D(F^{-1}(\bfb))\quad\text{and}\quad F_*(\D(\bfb)) \longiso \D(F(\bfb))
  \end{equation}
in the category $D_{\bfG^F}^b(\flag\times\flag)$. Note that it is crucial to work with $\bfG^F$-equivariant sheaves here, as such isomorphisms do not make sense in
the category $\scrD$.

\smallskip

Brou\'e and Michel~\cite[\S{2.A}]{BM} constructed $\bfG^F$-equivariant morphisms between Deligne--Lusztig varieties inducing isomorphisms between their cohomology groups. In our setting this can be rephrased as an isomorphism 
  \begin{equation}\label{eq:bm}
	\Indu_F\big(\Delta(\bfc) \odot \Delta(F(\bfb))\big) \longiso \Indu_F\big(\Delta(\bfb) \odot \Delta(\bfc)\big)
  \end{equation}
in $D^b(\bbK\bfG^F\text{-mod})$, for any $\bfb,\bfc \in B_W^+$. Following a result of Lusztig in the case of character sheaves~\cite[1.11(a)]{lusztig truncated} we construct such an isomorphism for general objects in the equivariant derived category $D_{\bfG^F}^b(\flag \times \flag)$. We expect our construction to generalise the one of Brou\'e-Michel
but we did not check that.

\begin{proposition}\label{prop:trace}
	Let $\calF,\calG \in D_{\bfG^F}(\flag\times\flag)$. Then there exists an isomorphism
	  $$ c_{\calF,\calG} : \Indu_F(\calF \odot \calG) \longiso \Indu_F(F^*(\calG) \odot \calF)$$
	which is natural in $\calF$ and $\calG$.
\end{proposition}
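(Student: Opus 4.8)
The plan is to realize $\Indu_F(\calF\odot\calG)$ geometrically as the cohomology of a space built from $\calF$ and $\calG$ and the graph of $F$, and then exhibit a symmetry of that space that swaps the roles of the two sheaves at the cost of applying $F^*$ to one of them. Concretely, recall $\calF\odot\calG=(p_{13})_!(p_{12}^*\calF\otimes p_{23}^*\calG)[-N]$, so by the projection-type formula and the definition of $\Indu_F$,
  $$\Indu_F(\calF\odot\calG)=R\Gamma\bigl(\flag,\iota^*(p_{13})_!(p_{12}^*\calF\otimes p_{23}^*\calG)\bigr)[-N].$$
Form the fibre product $Z=\{(B_1,B_2,B_3)\in\flag^3\mid B_3=F(B_1)\}$, i.e.\ the preimage of the graph of $F$ under $p_{13}$. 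Base change identifies the expression above (up to the shift) with $R\Gamma\bigl(Z,(q_{12}^*\calF\otimes q_{23}^*\calG)|_Z\bigr)$, where $q_{ij}$ are the restrictions of $p_{ij}$ to $Z$. So the first step is to carry out this base-change reduction carefully, keeping track of equivariance.

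The key geometric observation is that $Z$ carries an involution-like symmetry: the map $\sigma:Z\to Z$ sending $(B_1,B_2,B_3)=(B_1,B_2,F(B_1))$ to $(B_2,B_3,F(B_2))=(B_2,F(B_1),F(B_2))$ is a $\bfG^F$-equivariant isomorphism (its inverse sends $(B_1,B_2,B_3)$ to $(F^{-1}(B_3),B_1,B_2)$, which lands back in $Z$ because $B_2=F(F^{-1}(B_3))$ forces nothing — one checks $F(F^{-1}(B_3))=B_3$ and the first coordinate condition $B_2=F(F^{-1}(B_3))$ holds since the middle coordinate of the image must equal $F$ of the new first coordinate; this is exactly where $\bfG^F$-equivariance of $F$, via \eqref{eq:action of F}, is used). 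Under $\sigma$ one has $q_{12}\circ\sigma = q_{23}$, $q_{23}\circ\sigma$ is the map $(B_1,B_2,B_3)\mapsto(B_3,F(B_2))=(F(B_1'),F(B_2'))$ pulled back appropriately, so $\sigma^*(q_{12}^*\calF\otimes q_{23}^*\calG)\simeq q_{23}^*\calF\otimes q_{12}^*(F^*\calG)$ after using the compatibility of $F$ with $q_{13}=\mathrm{id}$ on the graph. Since $R\Gamma(Z,-)$ is insensitive to the automorphism $\sigma$, this yields the desired isomorphism $c_{\calF,\calG}$ between $\Indu_F(\calF\odot\calG)$ and $\Indu_F(F^*(\calG)\odot\calF)$.

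Naturality in $\calF$ and $\calG$ is then automatic, since every step — base change, the projection formula, pullback along the fixed map $\sigma$, and $R\Gamma$ — is functorial in the input sheaves; one just has to observe that the chain of isomorphisms is constructed without any choices depending on $\calF$ or $\calG$. The main obstacle I expect is bookkeeping: matching the two morphisms $q_{23}\circ\sigma$ and "$q_{12}$ twisted by $F$" on the nose, i.e.\ verifying that the twist by $F^*$ falls precisely on $\calG$ and not on $\calF$, and checking that the equivariance structures (the $\bfG^F$-equivariant structure on all sheaves involved, together with the canonical isomorphisms \eqref{eq:action of F}) are compatible through $\sigma$. A secondary point to handle with care is that $Z$ may be singular or reducible, so one should argue via the six-functor formalism on $Z$ (proper base change along the cartesian square defining $Z$ inside $\flag\times\bfG F$-type diagrams as in Proposition~\ref{prop:ind-forget}) rather than trying to stratify; this keeps the argument clean and makes the naturality transparent.
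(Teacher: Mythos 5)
Your construction is essentially the paper's proof in different clothes: the fibre product $Z=\{(B_1,B_2,B_3)\mid B_3=F(B_1)\}$ is identified with $\flag\times\flag$ via $(B_1,B_2)\mapsto(B_1,B_2,F(B_1))$, and under that identification your rotation $\sigma$ is exactly the paper's map $\theta(B_1,B_2)=(B_2,F(B_1))$, with your relation $q_{23}\circ\sigma=F\circ q_{12}$ playing the role of $\theta^2=F$; the base-change step, the fact that the twist lands on $\calG$, and the naturality argument all agree with the paper.

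The one point where your justification is wrong as stated is the claim that $\sigma$ is an isomorphism with inverse $(B_1,B_2,B_3)\mapsto(F^{-1}(B_3),B_1,B_2)$: when $F$ is a Frobenius-type endomorphism it is bijective on points but not invertible as a morphism of varieties, so no such inverse morphism exists and $\sigma$ is not an automorphism of $Z$. What saves the key step ``$R\Gamma(Z,-)$ is insensitive to $\sigma$'' is that $\sigma$ is a universal homeomorphism, hence an equivalence of \'etale sites, so the unit $\boldsymbol{1}\Rightarrow\sigma_*\sigma^*$ is invertible and $\sigma_*=\sigma_!$; this is precisely how the paper argues for $\theta$, and it is this statement (not invertibility of $F$) that you should invoke. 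The appeal to \eqref{eq:action of F} is beside the point here; what matters is only that $\sigma$ is $\bfG^F$-equivariant (it is not $\bfG$-equivariant, which is why one must work in $D^b_{\bfG^F}$). Finally, your worry that $Z$ might be singular or reducible is moot, since $Z\simeq\flag\times\flag$ is smooth and irreducible.
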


\begin{proof}
Let us first consider the following commutative diagram, where the bottom right square is cartesian
  \begin{equation}
	\xymatrix{ && \flag \times \flag \ar[lld]_{q_{ij}} \ar[ld]^{\iota'} \ar[d]^{p_{1}} \ar@(r,u)[]_\theta&  \\ 
	\flag \times \flag & \flag \times \flag \times \flag \ar[l]^{p_{ij}} \ar[d]^{p_{13}} & \flag \ar[ld]^{\iota} \\ 
	& \flag \times \flag } 
  \end{equation}
and the maps are given by
  $$\begin{cases}
	\iota'(B_1,B_2)=(B_1, B_2,F(B_1)),\\
	p_{ij}(B_1,B_2,B_3)=(B_i,B_j),\\
	p_{1}(B_1,B_2)=B_1,\\
	q_{ij}=p_{ij} \circ \iota',\\
	\theta(B_1,B_2) = (B_2,F(B_1)).
  \end{cases}$$
Given $\calF$ and $\calG$ two objects in $D_{\bfG^F}(\flag\times\flag)$, we have natural isomorphisms
  \begin{align*}
	&& \iota^*((p_{13})_!(p_{12}^* \calF \otimes p_{23}^* \calG)) 
	  \longiso\ & (p_{1})_! \iota^{\prime *}(p_{12}^* \calF \otimes p_{23}^* \calG)  && \text{(base change)}\\
	  &&  \longiso\ & (p_{1})_! (q_{12}^* \calF \otimes q_{23}^* \calG) && \text{(composition)} \\
	  &&  =\ & (p_{1})_! (\calF \otimes \theta^* \calG) &&
  \end{align*}
since $q_{12} = \mathrm{Id}_{\flag \times \flag}$ and $q_{23} = \theta$. We denote by $\eta$ the isomorphism of functors obtained after taking global sections:
  $$ \eta : \Indu_F (-_1 \odot -_2 ) \mathop{\Longrightarrow}\limits^\sim R\Gamma(\flag \times \flag, -_1 \otimes \theta^*(-_2))[-N].$$
As $\theta$ is an equivalence of \'etale sites, the unit ${\boldsymbol{1}} \Longrightarrow \theta_*\theta^*$ is a natural isomorphism of functors (and note that $\theta_*=\theta_!$). Consequently we obtain another sequence of natural isomorphisms
  \begin{align*}
	&&\calF \otimes \theta^* \calG   \longiso\ & \theta_! \theta^* (\calF \otimes \theta^* \calG) && \text{(unit)}\\
	&& \longiso\ &  \theta_!(\theta^* \calF \otimes F^* \calG) && \text{(composition)}\\
	&& \longiso\ & \theta_!( F^* \calG \otimes \theta^* \calF). && \text{(symmetry)}
  \end{align*}
Note that we have used that $\theta^2 = F$. Combining these with the isomorphism $R\Gamma(\flag,\theta_!(-)) \mathop{\Longrightarrow}\limits^\sim  R\Gamma(\flag,-)$ coming from the composition of push-forwards, we obtain a natural transformation $\gamma$
given by
  $$\gamma : R\Gamma(\flag \times \flag, -_1 \otimes \theta^*(-_2)) \longiso R\Gamma(\flag \times \flag, F^*(-_2) \otimes \theta^*(-_1)).$$
The composition $c = \eta^{-1} \circ \gamma \circ \eta$ gives the required natural
isomorphism. \end{proof}

\section{Deligne--Lusztig varieties and their cohomology}
Let $w \in W$. Recall from \S\ref{ssec:flag} that $\bfO(w)$ denotes the $\bfG$-orbit on $\flag \times\flag$ associated to $w$. The intersection of $\bfO(w)$ with the graph $\boldsymbol \Gamma_F$ of $F$ is the Deligne--Lusztig variety $\bfX(wF)$ associated to $wF$
  $$\bfX(wF) = \bfO(w) \cap \boldsymbol\Gamma_F = \{ (B,B') \in \bfO(w) \, | \, B' = F(B)\}.$$
It is a smooth quasi-projective variety of dimension $\ell(w)$. We denote by $H^i(\bfX(wF))$ (resp. $H_c^i(\bfX(wF))$) the $i$-th cohomology group (resp. cohomology group with compact support) of $\bfX(wF)$ with coefficients in $\bbK$. 
We denote by $IH^i(\overline{\bfX(wF)})$   the $i$-th  intersection cohomology group
of $\overline{\bfX(wF)}$ with coefficients in $\bbK$.
The action of $\bfG$ on $\bfO(w)$ restricts to an action of $\bfG^F$ on $\bfX(wF)$. Consequently, the three types of cohomology groups above are representations of $\bfG^F$ over $\bbK$.

\smallskip

Recall from \S\ref{ssec:standard} that one can also attach a variety $\bfO(\bfb)$ to any element $\bfb$ of the braid monoid ${B}_W^+$. We define more generally the variety $\bfX(\bfb F)$ by the following cartesian square:
  \begin{equation}\label{eq:dlvariety}
	\xymatrix{ \bfX(\bfb F) \ar@{^(->}[r] \ar[d]^{j_\bfb'} & \bfO(\bfb) \ar[d]^{j_\bfb} \\ 
	\boldsymbol\Gamma_F \ar@{^(->}[r]^-{\iota} & \flag\times\flag }
  \end{equation}
where $\iota : \boldsymbol \Gamma_F \hookrightarrow \flag\times\flag$ is the inclusion of the graph of $F$. The various cohomology groups of Deligne--Lusztig varieties can be obtained through $\Ind_F$ as the image of the objects in $\scrD$ defined in \S\ref{ssec:flag} and \S\ref{ssec:standard}. For convenience we will state the result for the  functor $\Indu_F$ defined in \S\ref{ssec:trace} instead of $\Ind_F$ (see also \eqref{eq:ind-stalk}). 

\begin{proposition}\label{prop:imageofind}
	We have, for all $\bfb \in {B}_W^+$ and $w \in W$, 
	\begin{itemize}
  	 	\item[$\mathrm{(i)}$] $\Indu_F(\Delta(\bfb)) \simeq R\Gamma_c(\bfX(\bfb F))[N+\ell(\bfb)]$, 
 	 	\item[$\mathrm{(ii)}$]  $\Indu_F(\nabla(\bfb)) \simeq R\Gamma(\bfX(\bfb F))[N+\ell(\bfb)]$, 
  		\item[$\mathrm{(iii)}$]  $\Indu_F(L(w))  \simeq IC(\overline{\bfX(w F)},\bbK_{\bfX(w F)})[N+\ell(w)]$.
	\end{itemize}
\end{proposition}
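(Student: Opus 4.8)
The plan is to prove all three statements in a uniform way by unwinding the definitions of $\Indu_F$ and of the varieties $\bfX(\bfb F)$, reducing everything to a proper (or smooth) base change along the cartesian square \eqref{eq:dlvariety}. First I would recall that by definition $\Indu_F(\calG) = R\Gamma(\flag, \iota^*\calG)$ for $\calG \in D_{\bfG^F}^b(\flag\times\flag)$, where $\iota\colon \boldsymbol\Gamma_F \hookrightarrow \flag\times\flag$ identifies $\flag$ with the graph of $F$. For $(\mathrm{i})$, unwinding $\Delta(\bfb) = (j_\bfb)_!\bbK_{\bfO(\bfb)}[\ell(\bfb)+N]$ and applying proper base change to the square \eqref{eq:dlvariety}, one gets $\iota^*(j_\bfb)_!\bbK_{\bfO(\bfb)} \simeq (j_\bfb')_!\bbK_{\bfX(\bfb F)}$; taking $R\Gamma(\flag,-)$ and using that $R\Gamma(\flag, (j_\bfb')_! -) = R\Gamma_c(\bfX(\bfb F),-)$ (since $\flag$ is proper, $R\Gamma = R\Gamma_c$ on $\flag$, and $R\Gamma_c$ composes with $(j_\bfb')_!$), one obtains $\Indu_F(\Delta(\bfb)) \simeq R\Gamma_c(\bfX(\bfb F))[N+\ell(\bfb)]$. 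Here one must be a little careful about the case of a genuine braid $\bfb$, where $\bfO(\bfb)$ is a pro-variety and $\Delta(\bfb)$ is defined via the isomorphisms \eqref{eq:multbis}, \eqref{eq:invbis}; but it suffices to treat $\bfb \in B_W^+$, where $\bfO(\bfb)$ and $\bfX(\bfb F)$ are honest varieties and $j_\bfb$ is the composite of the structure maps, so the base-change argument goes through directly on a representing sequence $\bfx_1\cdots\bfx_r$.

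For $(\mathrm{ii})$, I would run the dual argument: $\nabla(\bfb) = (j_\bfb)_*\bbK_{\bfO(\bfb)}[\ell(\bfb)+N]$, and one needs $\iota^*(j_\bfb)_* \simeq (j_\bfb')_* \iota_{\bfX}^*$ where $\iota_\bfX\colon \bfX(\bfb F)\hookrightarrow \bfO(\bfb)$. This is \emph{not} plain base change: smooth base change would give it if $\iota$ were smooth, which it is not. The clean way is to invoke Poincaré–Verdier duality: $\nabla(\bfb)$ is the Verdier dual of $\Delta(\bfb^*)$ up to the relevant shift/twist, $\Indu_F$ intertwines the two natural dualities (this is already implicitly used in the Remark after Proposition~\ref{prop:compatibility}, where it is noted that duality exchanges the two definitions of $\Res_F$ and preserves filtrations), and $R\Gamma_c(\bfX(\bfb^* F))$ is dual to $R\Gamma(\bfX(\bfb^* F))$; combined with the known $\bfG^F$-equivariant isomorphism $\bfX(\bfb F)\simeq \bfX(\bfb^* F)$ (or directly working with $\bfb$ throughout), this yields $(\mathrm{ii})$ from $(\mathrm{i})$. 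Alternatively one can argue directly that $\iota$ is a regularly embedded (in fact the graph of a morphism, hence a section of a smooth projection), deduce a purity/base-change statement for $*$-pushforward along $j_\bfb$, since $\bfO(\bfb)\to\flag\times\flag$ restricted appropriately is smooth on the relevant strata; I expect the duality route to be the least painful.

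For $(\mathrm{iii})$, the point is that for $w\in W$ the map $j_w\colon\bfO(w)\hookrightarrow\flag\times\flag$ is an affine locally closed embedding, $L(w) = (j_w)_{!*}\bbK_{\bfO(w)}[\ell(w)+N]$ is a perverse sheaf (the $IC$ sheaf of $\overline{\bfO(w)}$), and one must check that $\iota^* L(w)$ identifies, up to the shift, with the $IC$ extension of $\bbK_{\bfX(wF)}$ on $\overline{\bfX(wF)}$. The key input is that $\boldsymbol\Gamma_F$ meets each $\bfG$-stratum $\bfO(v)$ of $\flag\times\flag$ transversally — this is the standard transversality of the graph of a Frobenius-type endomorphism with the Bruhat stratification, used already by Deligne–Lusztig to see that $\bfX(wF)$ is smooth of dimension $\ell(w)$ — so that $\iota$ restricted to $\overline{\bfX(wF)}$ is, stratum by stratum, a normally nonsingular (smooth) slice. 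For such a slice, $\iota^*[\text{shift}]$ commutes with the intermediate-extension functor, giving $\iota^*L(w)[-\text{codim}] \simeq IC(\overline{\bfX(wF)},\bbK)[\ell(w)+N-\text{codim}]$; keeping track of the shift $N = \operatorname{codim}_{\flag\times\flag}\boldsymbol\Gamma_F$ and then applying $R\Gamma(\flag,-) = R\Gamma_c(\flag,-)$ produces $IC(\overline{\bfX(wF)},\bbK_{\bfX(wF)})[N+\ell(w)]$ as the image under $\Indu_F$.

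The main obstacle, I expect, is $(\mathrm{iii})$: making the transversality statement precise enough to justify ``$\iota^*$ commutes with $IC$'' requires either the normally-nonsingular-slice formalism for perverse sheaves (Goresky–MacPherson / Kashiwara–Schapira style) or a local computation showing $\boldsymbol\Gamma_F$ is a smooth slice to every Bruhat cell in $\flag\times\flag$ simultaneously. The cleanest route is probably to note that the second-projection-minus-$F$-of-first-projection map realizes an open neighbourhood of $\overline{\bfX(wF)}$ in $\overline{\bfO(w)}$ as, étale-locally, a product with an affine space, using the Lang–Steinberg surjectivity of $g\mapsto g^{-1}F(g)$; then base change for $\iota^*$ against a smooth morphism with the constant sheaf gives the $IC$ compatibility formally. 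Parts $(\mathrm{i})$ and $(\mathrm{ii})$ are comparatively routine once one has set up proper base change along \eqref{eq:dlvariety} and the properness of $\flag$; the only care needed there is bookkeeping of the shifts and the reduction, for genuine braid-group elements, to the monoid case where the varieties are genuine schemes.
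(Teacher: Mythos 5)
Part (i) of your proposal is exactly the paper's argument (proper base change along \eqref{eq:dlvariety}, then $R\Gamma(\flag,(j'_\bfb)_!-)=R\Gamma_c(\bfX(\bfb F),-)$ by properness of $\flag$), and for (iii) your ``cleanest route'' is in substance what the paper does: following Lusztig, it sets $\bfX_w=\calL^{-1}(\overline{\bfB_0w\bfB_0})$, uses that $(g,h)\mapsto({}^g\bfB_0,{}^{gh}\bfB_0)$ is smooth and that the restriction of the Lang map $\calL$ is \'etale to identify $\pi_w^*\iota_w^*L(w)$ with the intersection complex of $\bfX_w$, and then descends along the principal $\bfB_0$-bundle $\pi_w$. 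Your alternative formulation via transversality of $\boldsymbol\Gamma_F$ with the Bruhat strata (normally nonsingular slice, using $dF=0$) is a legitimate variant, but as written it is only a pointer: you would still have to prove the slice statement, whereas the Lang-map diagram does this concretely. Your shift bookkeeping in (iii) is consistent with the statement.

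The genuine gap is in your primary route for (ii). The assertion that $\Indu_F$ ``intertwines the two natural dualities'' is not formal and, for arbitrary objects of $D^b_{\bfG^F}(\flag\times\flag)$, it is false: writing $D$ for Verdier duality, one has $D(\Indu_F(\calG))\simeq R\Gamma\bigl(\flag,\iota^!(D\calG)\bigr)$, so the claim amounts to $\iota^!\calG\simeq\iota^*\calG[-2N](-N)$ for the objects considered, and no uniform shift can work (compare a skyscraper on a finite $\bfG^F$-orbit contained in $\boldsymbol\Gamma_F$ with the constant sheaf on $\flag\times\flag$). The remark you invoke after Proposition \ref{prop:compatibility} only says that the two candidate restriction functors $f_!\varpi^*$ and $f_*\varpi^!$ are exchanged by duality; it gives nothing about $\Indu_F$. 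The statement you need does hold for $\Delta(\bfb)$, $\nabla(\bfb)$, $L(w)$, but precisely because these come from $\bfG$-equivariant complexes: either pass through $\Ind_F=\varpi_!f^*$ via Proposition \ref{prop:ind-forget}, using that $\varpi$ is proper and $f$ is smooth (so $D\circ\Ind_F\simeq\Ind_F\circ D$ up to a fixed shift and twist, and the fiber-at-$F$ equivalence \eqref{eq:unipsheaves} is compatible with duality since $\bfG F$ is a smooth homogeneous space), or prove the transversality of $\boldsymbol\Gamma_F$ with the $\bfG$-orbits that also underlies (iii). In other words, the duality route does not avoid the geometric input, it only relocates it; your fallback transversality argument is the right fix and is also what the paper's terse ``smooth base change on \eqref{eq:dlvariety}'' amounts to. Two smaller slips: Verdier duality exchanges $(j_\bfb)_!$ and $(j_\bfb)_*$ along the same map, so $D\Delta(\bfb)\simeq\nabla(\bfb)$ up to shift and twist with the same $\bfb$, not $\bfb^*$; and you should not rely on an isomorphism of varieties $\bfX(\bfb F)\simeq\bfX(\bfb^*F)$, which is neither needed nor available in the paper.
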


\begin{proof}
Let $\bfb \in {B}_W^+$ and $j_\bfb : \bfO(\bfb) \longrightarrow \flag\times\flag$ be the map considered in \S\ref{ssec:standard}. Since we are working with $\bfG^F$-equivariant sheaves, we shall consider $j_\bfb$ as a $\bfG^F$-equivariant map only. By base change on \eqref{eq:dlvariety}, we get an isomorphism
  $$ \iota^* (\Delta(\bfb)) = \iota^* (j_\bfb)_!(\bbK_{\bfO(\bfb)})[N+\ell(\bfb)] 
	\longiso\  (j_\bfb')_!(\bbK_{\bfX(\bfb)})[N+\ell(\bfb)] $$
from which we deduce (i) after taking the global section functor $R\Gamma(\flag,-)$. The isomorphism (ii) is obtained in a similar way using the smooth base change on \eqref{eq:dlvariety}.

\smallskip

For (iii), we follow the proof of~\cite[Lemma~4.3]{lusztig odd}. We fix an $F$-stable Borel subgroup $\bfB_0$ of $\bfG$, an $F$-stable maximal torus $\bfT_0$ of $\bfB_0$ and we identify $\flag$ with $\bfG/\bfB_0$ and $W$ with $N_\bfG(\bfT_0)/\bfT_0$. Let $\pi : \bfG \to \bfG/\bfB_0$ be the canonical map and $\calL : \bfG \to \bfG$, $g \mapsto g^{-1} F(g)$ be the Lang map. Let $\bfX_w=\calL^{-1}(\overline{\bfB_0 w \bfB_0})$. The
	varieties $\overline{\bfB_0 w \bfB_0}$ and $\bfX_w$ are stable by right multiplication by $\bfB_0$. The map $\pi : \bfG \to \bfG/\bfB_0$ is a principal bundle and, by definition, we have $\pi(\bfX_w)=\overline{\bfX(wF)}$. Let us now consider 
the following commutative diagram:
$$\diagram 
\bfX_w \dto_{{\pi_w}} \rto^{{\varphi}\qquad} & 
\bfG \times \overline{\bfB_0 w \bfB_0} \dto^{\psi} \rto^{\quad{p_2}} & \overline{\bfB_0 w \bfB_0}\\
\overline{\bfX(wF)} \rto^{{\iota_w}} & \overline{\bfO(w)},
\enddiagram$$
	where $\pi_w$ (resp. $\iota_w$) denotes the restriction of $\pi$ (resp. $\iota$), $\varphi(g)=(g,\calL(g))$, $\psi(g,h)=({}^g\bfB_0,{}^{gh}\bfB_0)$ and $p_2$ is the second projection. By abuse of notation we will still denote by $L(w)$ its restriction to the closed subvariety  $ \overline{\bfO(w)}$ of $\flag \times \flag$. We have
	$\pi_w^*(\iota_w)^* (L(w)) \simeq \varphi^*\psi^*(L(w))$. Since
$\psi$ is a smooth map (with fibers isomorphic to $\bfB_0 \times \bfB_0$), we have
  $$\psi^*(L(w)) \simeq IC(\bfG \times \overline{\bfB_0 w \bfB_0},\bbK_{\bfG \times \bfB_0 w \bfB_0})[N + \ell(w)].$$
As $\bfG$ is smooth, this shows that 
  \begin{eqnarray*}
	\psi^*(L(w)) &\simeq& \bbK_\bfG \boxtimes IC(\overline{\bfB_0 w \bfB_0},\bbK_{\bfB_0 w \bfB_0})[N + \ell(w)] \\
	& \simeq & p_2^* IC(\overline{\bfB_0 w \bfB_0},\bbK_{\bfB_0 w \bfB_0})[N + \ell(w)].
  \end{eqnarray*}
The map $p_2 \circ \varphi$ is the restriction of $\calL$ to a map $\bfX_w \longrightarrow \overline{\bfB_0 w \bfB_0}$. 
Since this map is \'etale (hence smooth), one gets that 
  $$\pi_w^*(\iota_w)^*(L(w)) \simeq IC(\bfX_w,\bbK_{\calL^{-1}(\bfB_0 w \bfB_0)})[N + \ell(w)].$$
As $\pi_w$ is a principal bundle, this forces 
  $$(\iota_w)^* L(w) \simeq IC(\overline{\bfX(w F)},\bbK_{\bfX(w F)})[N+\ell(w)],$$
as expected.
\end{proof}

\section{Applications}\label{sec:applications}

\subsection{Invariance under conjugation by $B_W$}
Let $\bfb,\bfc \in B_W$. Using the results of the previous sections we obtain
a sequence of isomorphisms in $D^b(\bbK\bfG^F\text{-mod})$
  \begin{align*}
	&&  \Indu_F \D(\bfc \bfb F(\bfc^{-1})) & \longiso \Indu_F \big(\D(\bfc) \odot \D(\bfb) \odot \D(F(\bfc^{-1}))\big) && \\
	&& & \longiso \Indu_F \big(F^* \D(F(\bfc^{-1})) \odot \D(\bfc) \odot \D(\bfb) \big) && \text{(by Prop. \ref{prop:trace})}\\
	&& & \longiso \Indu_F \big(\D(\bfc^{-1}) \odot \D(\bfc) \odot \D(\bfb) \big) && \text{(by \eqref{eq:action of F})}\\
	&& & \longiso \Indu_F \D(\bfb). &&
  \end{align*}
By Proposition \ref{prop:imageofind} this has the following consequence, which was conjectured in~\cite[Conj.~3.1.7]{DMR06}.

\begin{theorem}\label{thm:conjugacy}
	Let $\bfb,\bfb' \in B_W^+$. If $\bfb F$ and $\bfb'F$ are  conjugate under $B_W$ then for every $i \in \bbZ$ the $\bbK\bfG^F$-modules $ H_c^i\big(\bfX(\bfb F)\big)$ and  $H_c^i\big(\bfX(\bfb'F)\big)$ are isomorphic. 
\end{theorem}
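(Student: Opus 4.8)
The plan is to translate the braid-conjugacy hypothesis into an equality of standard objects and then feed it into the chain of isomorphisms already assembled above. If $\bfb F$ and $\bfb'F$ are conjugate under $B_W$, choose $\bfc\in B_W$ with $\bfc(\bfb F)\bfc^{-1}=\bfb'F$ inside $B_W\rtimes\langle F\rangle$; unwinding this relation gives $\bfb'=\bfc\,\bfb\,F(\bfc^{-1})$ in $B_W$. It then suffices to produce an isomorphism $\Indu_F\D(\bfb')\simeq\Indu_F\D(\bfb)$ in $D^b(\bbK\bfG^F\text{-mod})$, since Proposition~\ref{prop:imageofind} will turn this into a statement about the cohomology of Deligne--Lusztig varieties.

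First I would use the multiplicativity $\D(\bfx)\odot\D(\bfx')\simto\D(\bfx\bfx')$ of standard objects over $B_W$ (\S\ref{ssec:standard}) to write $\D(\bfb')\simeq\D(\bfc)\odot\D(\bfb)\odot\D(F(\bfc^{-1}))$. Applying the cyclicity isomorphism of Proposition~\ref{prop:trace} with $\calF=\D(\bfc)\odot\D(\bfb)$ and $\calG=\D(F(\bfc^{-1}))$ gives $\Indu_F(\calF\odot\calG)\simeq\Indu_F(F^*(\calG)\odot\calF)$. Now \eqref{eq:action of F} identifies $F^*\D(F(\bfc^{-1}))$ with $\D(\bfc^{-1})$, and $\D(\bfc^{-1})\odot\D(\bfc)\simeq\D(1)$ is the unit for $\odot$, so the right-hand side collapses to $\Indu_F\D(\bfb)$. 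This is precisely the displayed chain of isomorphisms preceding the statement.

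To conclude I would pass to cohomology: since $\bfb,\bfb'\in B_W^+$, Proposition~\ref{prop:imageofind}(i) gives $\Indu_F\D(\bfb)\simeq R\Gamma_c(\bfX(\bfb F))[N+\ell(\bfb)]$ and likewise for $\bfb'$. Because $\ell\colon B_W\to\bbZ$ is a group homomorphism and $F$ permutes the simple reflections, hence preserves $\ell$, one has $\ell(\bfb')=\ell(\bfc)+\ell(\bfb)-\ell(\bfc)=\ell(\bfb)$, so the two shifts agree. Taking $i$-th cohomology of the resulting isomorphism in $D^b(\bbK\bfG^F\text{-mod})$ then yields $H_c^i(\bfX(\bfb F))\simeq H_c^i(\bfX(\bfb'F))$ as $\bbK\bfG^F$-modules for every $i\in\bbZ$.

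The real content lies upstream: the cyclicity isomorphism $c_{\calF,\calG}$ of Proposition~\ref{prop:trace} (the substitute, for arbitrary $\bfG^F$-equivariant complexes, for the Brou\'e--Michel morphisms between Deligne--Lusztig varieties) and the identity \eqref{eq:action of F} describing the action of $F^*$ on standard objects, which is exactly why one must work $\bfG^F$-equivariantly rather than $\bfG$-equivariantly. Granting those inputs the argument above is formal; the only points needing a word of care are that $\D(\bfc)$ is defined for $\bfc\in B_W$ (not merely $B_W^+$) via the invertibility isomorphisms \eqref{eq:invbis}, and that we only require a degreewise isomorphism of modules, so no coherence bookkeeping for the isomorphisms $c_{\calF,\calG}$ is needed here. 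Hence I expect no genuine obstacle beyond correctly invoking the results of the earlier sections.
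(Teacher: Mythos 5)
Your argument is exactly the paper's: write $\bfb'=\bfc\,\bfb\,F(\bfc^{-1})$, use multiplicativity of the standard objects together with Proposition~\ref{prop:trace} and \eqref{eq:action of F} to get $\Indu_F\D(\bfb')\simeq\Indu_F\D(\bfb)$, and then apply Proposition~\ref{prop:imageofind}(i). The only addition is your explicit check that $\ell(\bfb')=\ell(\bfb)$ so the shifts match, a point the paper leaves implicit; the proposal is correct.
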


\subsection{Translation by the full twist $\bpi$}
Recall from \S\ref{sec:filtration} that to every unipotent character $\rho$ is attached an $F$-stable two-sided cell $\Gamma_\rho$ and invariants $a_\rho$ and
$A_\rho$.
Digne--Michel--Rouquier conjectured in \cite[Conj. 3.3.24]{DMR06} a relation between the cohomology of the Deligne--Lusztig varieties $\bfX(\bfb F)$ and $\bfX(\boldsymbol\pi \bfb F)$. We give here a proof of this conjecture using Theorem \ref{thm:actionofpi}.

\begin{theorem}\label{thm:fulltwist}
	Let $\rho$ be a unipotent character of $\bfG^F$ and $\bfb \in \underline{B}_W^+$.We have 
	  $$ \big\langle \rho, H_c^i\big(\bfX(\boldsymbol\pi \bfb F)\big) \big\rangle_{\bfG^F} = \big\langle\rho, H_c^{i-4N+2A_{\rho}}\big(\bfX(\bfb F)\big)\big\rangle_{\bfG^F}$$
	for all $i \in \bbZ$.
\end{theorem}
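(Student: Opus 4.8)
The plan is to reduce the statement to Theorem~\ref{thm:actionofpi} (via Corollary~\ref{cor:actionoftpi}) applied to a suitable object in $\scrD_{\preccurlyeq\Gamma}$, and then to transport the shift across the induction functor $\Indu_F$ using Propositions~\ref{prop:compatibility} and \ref{prop:imageofind}. First I would fix a unipotent character $\rho$ and set $\Gamma=\Gamma_\rho$, an $F$-stable two-sided cell. Since both sides of the claimed identity only involve the multiplicity of $\rho$, it suffices to work in the subquotient category $\scrU_{\preccurlyeq\Gamma}/\scrU_{\prec\Gamma}$, which, under the equivalence \eqref{eq:unipsheaves}, is the derived category of modules over the product of matrix algebras indexed by the unipotent characters in the family attached to $\Gamma$. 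Projecting $\Indu_F(\Delta(\bfb))$ to this subquotient kills every constituent $\rho'$ with $\Gamma_{\rho'}\neq\Gamma$ and leaves the $\rho$-isotypic part intact (up to the other members of the family, but their multiplicities do not interfere with that of $\rho$).

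The key computational step is the chain
$$\Indu_F\big(\Delta(\bpi\bfb F)\big)\;\simeq\;\Indu_F\big(\Delta(\bpi)\odot\Delta(\bfb)\big)\;\simeq\;\big(\Ind_F(\Delta(\bpi)\odot\Delta(\bfb))\big)_F,$$
where I use \eqref{eq:ind-stalk} and the monoidal isomorphism $\Delta(\bpi)\odot\Delta(\bfb)\simto\Delta(\bpi\bfb)$ of \S\ref{ssec:standard} (here $\bpi\in B_W^+$ so \eqref{eq:multbis} applies even though $\bfb$ lies in the enlarged monoid). Now $\Ind_F$ respects the cell filtration by Proposition~\ref{prop:compatibility}, so after projecting to $\scrD_{\preccurlyeq\Gamma}/\scrD_{\prec\Gamma}$ one may replace $\Delta(\bpi)\odot\Delta(\bfb)$ by $(\Delta(\bpi)\odot-)$ applied to the image of $\Delta(\bfb)$, and Theorem~\ref{thm:actionofpi} identifies this with $[-2a_\Gamma]$. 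The upshot is that in $\scrU_{\preccurlyeq\Gamma}/\scrU_{\prec\Gamma}$ we have $\Indu_F(\Delta(\bpi\bfb))\simeq\Indu_F(\Delta(\bfb))[-2a_\Gamma]$, hence, reading off multiplicities of $\rho$ and using Proposition~\ref{prop:imageofind}(i), the degree shift on $R\Gamma_c(\bfX(\bfb F))$ seen by $\rho$ is $-2a_\Gamma$ after accounting for the shift $[N+\ell(\bfb)]$ on one side versus $[N+\ell(\bpi\bfb)]=[N+\ell(\bfb)+2N]$ on the other, i.e. a net shift of $2a_\Gamma+2N$ in cohomological degree; comparing with Remark~\ref{rmk:avalue}, which gives $a_\rho=a_{\Gamma_\rho w_0}$, and the standard identity $a_{\Gamma w_0}=N-A_\rho$ (equivalently $a_\Gamma+A_\rho=N$, coming from $a+A=2N$ under the $w_0$-twist normalisation), one rewrites $2a_\Gamma+2N$ as $4N-2A_\rho$, which is exactly the shift appearing in the statement.

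The main obstacle I anticipate is the \emph{bookkeeping of normalisations}: matching Lusztig's $a$-invariant $a_\Gamma$ on the cell with the character-theoretic invariant $A_\rho$, keeping straight the $w_0$-twist that intervenes between $\Gamma_\rho$ and Lusztig's original labelling (Remark~\ref{rmk:avalue}), and tracking the perverse shifts $[N+\ell(\bfb)]$ through $\Indu_F$ on both sides. A secondary point requiring care is the justification that projecting to the subquotient $\scrU_{\preccurlyeq\Gamma}/\scrU_{\prec\Gamma}$ genuinely computes $\langle\rho,H_c^i(-)\rangle$ for the single character $\rho$ — one needs that distinct $F$-stable cells give rise to disjoint sets of unipotent characters and that $\Ind_F$ does not mix them, which is precisely the content of Proposition~\ref{prop:compatibility}, but the reduction from ``multiplicity of $\rho$ in an object of $\scrU$'' to ``its image in the subquotient'' should be spelled out. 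Once these normalisation identities are pinned down, the rest is a formal concatenation of the isomorphisms already established. (The passage to $\bfb\in\underline{B}_W^+$ rather than $\bfb\in B_W^+$ causes no extra trouble, since $\bpi\in B_W^+$ and \eqref{eq:multbis} holds in the enlarged monoid.)
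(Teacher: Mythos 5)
\textbf{Referee comment on the proposed proof of Theorem~\ref{thm:fulltwist}.}
The decisive step of your argument --- ``after projecting to $\scrD_{\preccurlyeq \Gamma}/\scrD_{\prec \Gamma}$ one may replace $\Delta(\bpi)\odot\Delta(\bfb)$ by $(\Delta(\bpi)\odot-)$ applied to the image of $\Delta(\bfb)$, and Theorem~\ref{thm:actionofpi} identifies this with $[-2a_\Gamma]$'' --- is not available. The object $\Delta(\bfb)$ does not lie in $\scrD_{\preccurlyeq\Gamma}$ for $\Gamma=\Gamma_\rho$ in general: already for $\bfb=\bfw$ with $w\in W$, every $L(x)$ with $x\le w$ occurs as a constituent, in particular $L(1)$, and $\{1\}$ is the \emph{maximal} cell. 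So $\Delta(\bfb)$ has no image in the subquotient $\scrD_{\preccurlyeq\Gamma}/\scrD_{\prec\Gamma}$, and Theorem~\ref{thm:actionofpi} says nothing about $\Delta(\bpi)\odot-$ on the larger quotient $\scrD/\scrD_{\prec\Gamma}$, where this functor is only ``filtered by shifts'', not a shift. For the same reason your conclusion $\Indu_F(\Delta(\bpi\bfb))\simeq\Indu_F(\Delta(\bfb))[-2a_\Gamma]$ cannot hold after merely killing $\scrU_{\prec\Gamma}$: constituents $\rho'$ with $\Gamma_{\rho'}\succ\Gamma$ survive and are shifted by $-2a_{\Gamma_{\rho'}}\neq -2a_\Gamma$. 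On the $\scrU$ side one can repair this by taking the $\rho$-isotypic component (the category is semisimple), but on the $\scrD$ side there is no such projection, only the filtration --- and that is exactly where the argument breaks.

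The missing idea is to move the problem across the adjunction rather than act on $\Delta(\bfb)$. The multiplicity $\langle\rho,H_c^{\bullet}(\bfX(\bpi\bfb F))\rangle$ is computed by $\Hom_{\scrU}\big(\Ind_F(\Delta(\bpi\bfb))[i],\calF_\rho\big)$, which by adjunction and \eqref{eq:duality} equals $\Hom_{\scrD}\big(\Delta(\bfb)[i],\Delta(\bpi^{-1})\odot\Res_F(\calF_\rho)\big)$. The object $\Res_F(\calF_\rho)$ \emph{does} lie in $\scrD_{\preccurlyeq\Gamma_\rho}$ (Proposition~\ref{prop:compatibility}) and, crucially, is right-orthogonal to $\scrD_{\prec\Gamma_\rho}$; this orthogonality is proved by identifying $\Hom_\scrD\big(L(w)[i],\Res_F(\calF_\rho)\big)$ with $\Hom_{\bbK\bfG^F}\big(IH^{i+N+\ell(w)}(\overline{\bfX(wF)}),\rho\big)$ via Proposition~\ref{prop:imageofind}(iii) --- a step entirely absent from your proposal. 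Corollary~\ref{cor:actionoftpi} then applies to $\Res_F(\calF_\rho)$, giving $\Delta(\bpi^{-1})\odot\Res_F(\calF_\rho)\simeq\Res_F(\calF_\rho)[2a_{\Gamma_\rho}]$ in $\scrD$ itself, and undoing the adjunction produces the shift. Your final numerical bookkeeping does come out right (one needs $a_{\Gamma_\rho}+A_\rho=N$, whence $2a_{\Gamma_\rho}+2N=4N-2A_\rho$), but the parenthetical identities ``$a_{\Gamma w_0}=N-A_\rho$'' and ``$a+A=2N$'' are not correct as stated (Remark~\ref{rmk:avalue} gives $a_\rho=a_{\Gamma_\rho w_0}$, and $a_\rho+A_\rho=N$ fails, e.g.\ for the Steinberg character); this is secondary, the structural gap above being the real issue.
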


\begin{proof}
Let $\calF_\rho \in \scrU_{\preccurlyeq \Gamma_\rho}$ be the image of $\rho$ under the
	inverse of \eqref{eq:unipsheaves}. Given $i \in \bbZ$ and $w \in W$,
Proposition \ref{prop:imageofind} shows that
  $$\begin{aligned} 
	\Hom_{\scrD} \big(L(w)[i], \Res_F(\calF_\rho)\big) &\, \simeq  \Hom_{\scrU} \big(\Ind_F(L(w))[i], \calF_\rho\big) \\
	 & \, \simeq \Hom_{\bbK\bfG^F}\big(\Indu_F(L(w))[i],\rho\big)\\
	 & \, \simeq \Hom_{\bbK\bfG^F}\big(IH^{i+N+\ell(w)}\big(\overline{\bfX(wF)}\big),\rho\big)
  \end{aligned}$$ 
which is zero unless $w \in \Gamma$ with $\Gamma_\rho \preccurlyeq \Gamma$ (see Proposition \ref{prop:compatibility}). Consequently, $\Hom_\scrD\big(\calG,\Res_F(\calF_\rho)\big) = 0$ for every $\calG \in \scrD_{\prec \Gamma_\rho}$. Using Corollary \ref{cor:actionoftpi} we get the following isomorphism in $\scrD$
  $$\Delta(\boldsymbol\pi^{-1}) \odot \Res_F(\calF_\rho) \simeq \Res_F(\calF_\rho)[2a_{\Gamma_\rho}].$$

Let $\bfb \in \underline{B}_W^+$ and $i \in \bbZ$. Using the adjunctions and the previous isomorphism we obtain
  $$\begin{aligned}
	\Hom_{\scrU}\big(\Ind_F(\Delta(\bpi \bfb))[i], \calF_\rho\big) 
	&\, \simeq \Hom_{\scrD}\big(\Delta(\bpi \bfb)[i], \Res_F(\calF_\rho)\big) \\
	&\, \simeq \Hom_{\scrD}\big(\Delta(\bfb)[i], \Delta(\bpi^{-1}) \odot\Res_F(\calF_\rho)\big) \\
	&\, \simeq \Hom_{\scrD}\big(\Delta(\bfb)[i], \Res_F(\calF_\rho)[2a_{\Gamma_\rho}]\big)\\
	&\, \simeq \Hom_{\scrU}\big(\Ind_F(\Delta(\bfb))[i-2a_{\Gamma_\rho}], \calF_\rho\big).
  \end{aligned}$$
By Proposition \ref{prop:imageofind}, under the equivalence \eqref{eq:unipsheaves}  between $D^b_\bfG(\bfG F)$ and $D^b(\bbK\bfG^F\text{-mod})$ the previous isomorphism becomes
  $$\Hom_{\bbK\bfG^F}\big(H_c^{i+N+\ell(\bpi \bfb)}\big(\bfX(\bpi \bfb F)\big),\rho\big) \simeq  \Hom_{\bbK\bfG^F}\big(H_c^{i+N+\ell(\bfb)-2a_{\Gamma_\rho}}\big(\bfX( \bfb F)\big),\rho\big)$$
and we conclude by taking the dimensions and using the equality $\ell(\bpi) +2 a_{\Gamma_\rho} = 4N-2A_{\rho}$ which comes from the definition of $A_\rho$.
\end{proof}

As a particular case, the cohomology of the Deligne--Lusztig variety associated to the full twist is given by 
  \begin{equation}\label{eq:xpi}
	  H_c^\bullet \big(\bfX(\boldsymbol\pi F)) \simeq \bigoplus_{\chi \in \mathrm{Irr}\, (W^F)} \rho_\chi^{\oplus \chi(1)}[2A_{{\rho_\chi}}-4N]
  \end{equation}
where $\rho_\chi$ is the principal series unipotent character corresponding to $\chi$. This was first conjectured by Brou\'e--Michel~\cite[Conj.~2.15(3), (4), (5)]{BM}

\subsection{Cohomology for roots of the full twist in type $A$}
We finish by stating some consequences of Theorem \ref{thm:fulltwist} for groups of type $A$ using the results of \cite{Du}. In this section only we assume that $\bfG = \mathrm{GL}_n$ and $F$ is the standard Frobenius endomorphism. The following result solves a conjecture of Brou\'e--Michel (see \cite[Conj. 5.7(1)]{BM}) in this case.

\begin{theorem}
	Assume that $(\bfG,F)$ is a group of type $A$. Let $d \geq 1$ and $\bfw \in B_W^+$ be a $d$-th root of $\bpi$. For all $i \neq j$, we have
	  $$ \big\langle H_c^i(\bfX(\bfw F)) , H_c^j(\bfX(\bfw F)) \big\rangle_{\bfG^F} = 0.$$
\end{theorem}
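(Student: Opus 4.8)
The plan is to combine the disjointness statement for the full twist (a consequence of Theorem \ref{thm:fulltwist}) with the Lefschetz-type properties established in \cite{Du}, transporting disjointness from $\bfX(\bpi F)$ down to the $d$-th root $\bfw F$. First I would recall from \eqref{eq:xpi} that $H_c^\bullet(\bfX(\bpi F))$ is a direct sum of principal series characters $\rho_\chi$, each placed in a single cohomological degree $4N-2A_{\rho_\chi}$; in particular distinct irreducible constituents appearing in distinct degrees of $H_c^\bullet(\bfX(\bpi F))$ share no common constituent, so the $i\neq j$ disjointness holds trivially for the full twist itself. The point is then to propagate this to $\bfw$ with $\bfw^d=\bpi$.

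Next I would invoke the structure established by the second author in \cite{Du} for $\mathrm{GL}_n(q)$: the cohomology of $\bfX(\bfw F)$ for $\bfw$ a root of $\bpi$ is controlled by the action of $\bfw$ (equivalently $F$) on a suitable complex, and more precisely the various $H_c^i(\bfX(\bfw F))$ are described via eigenspaces of $F$ (or of the root of the full twist) acting on the cohomology/${\mathrm{IH}}$ of the ambient variety, together with a degree-shift dictionary. The key input is that in type $A$ the cohomology of $\bfX(\bfw F)$ is \emph{clean}: each unipotent constituent $\rho$ occurs in $H_c^\bullet(\bfX(\bfw F))$ in a single degree, the degree being determined by $A_\rho$ (and by the fact that $\bfw$ is a root of $\bpi$, via Theorem \ref{thm:fulltwist} applied with $\bfb=\bfw^{d-1}$, or iteratively). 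Concretely, one would argue: if $\rho$ occurs in $H_c^i(\bfX(\bfw F))$ then, convolving with $\Delta(\bfw)$ $d$ times and using Theorem \ref{thm:actionofpi}/Corollary \ref{cor:actionoftpi} as in the proof of Theorem \ref{thm:fulltwist}, $\rho$ occurs in $H_c^{i'}(\bfX(\bpi F))$ with $i' = i + (\text{shift depending only on }\rho, d, \ell(\bfw))$; since the latter degree is forced to be $4N-2A_\rho$, the original degree $i$ is likewise determined by $\rho$ alone. Hence no two distinct degrees of $H_c^\bullet(\bfX(\bfw F))$ can contain the same irreducible unipotent constituent.

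Then the disjointness $\langle H_c^i(\bfX(\bfw F)), H_c^j(\bfX(\bfw F))\rangle_{\bfG^F}=0$ for $i\neq j$ follows at once: the inner product counts common irreducible constituents (with their multiplicities), and by the previous paragraph any constituent of $H_c^i$ is a unipotent character occurring in exactly one degree, so it cannot also occur in $H_c^j$. One should note here that all constituents are automatically unipotent since $\bfX(\bfw F)$ has cohomology inside $\scrU$, the unipotent part.

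The main obstacle is the clean-degree statement — i.e. that in type $A$ each unipotent $\rho$ occurs in a single cohomological degree of $\bfX(\bfw F)$ when $\bfw$ is a root of $\bpi$. This is exactly the place where the hypothesis ``type $A$'' is essential and where the results of \cite{Du} are used in a genuinely nontrivial way: in general groups the cohomology of Deligne--Lusztig varieties for regular elements is not known to be torsion-free/clean, and one cannot deduce single-degree occurrence from Theorem \ref{thm:fulltwist} alone (that theorem only relates the $\bpi\bfb$ and $\bfb$ cohomologies, not $\bfw^d$ and $\bfw$ without further control). So the proof must quote from \cite{Du} both the explicit computation of $H_c^\bullet(\bfX(\bfw F))$ for roots of $\bpi$ in $\mathrm{GL}_n$ and the degree bookkeeping, and then check that Theorem \ref{thm:fulltwist} of the present paper supplies the one missing ingredient (parity/location of the full-twist cohomology) that \cite{Du} had left conjectural. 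I would organize the write-up as: (1) quote \eqref{eq:xpi}; (2) quote the relevant theorem of \cite{Du} expressing $H_c^\bullet(\bfX(\bfw F))$ in terms of an $F$-equivariant complex together with the statement that its cohomology is concentrated, per constituent, in a single degree once the full-twist case is known; (3) combine to conclude disjointness.
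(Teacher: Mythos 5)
Your overall route -- combine the full-twist computation \eqref{eq:xpi} with the results of \cite{Du} -- is the same as the paper's, but two points need attention. First, your ``concrete'' intermediate argument is not valid: convolving $d$ times by $\Delta(\bfw)$ and invoking Theorem \ref{thm:actionofpi}/Corollary \ref{cor:actionoftpi} gives nothing, because those results describe only $\Delta(\bpi)\odot -$; a proper root $\Delta(\bfw)$ of $\Delta(\bpi)$ does not act as a shift on the subquotients $\scrD_{\preccurlyeq\Gamma}/\scrD_{\prec\Gamma}$, and Theorem \ref{thm:fulltwist} relates $\bfX(\bpi\bfb F)$ to $\bfX(\bfb F)$, never $\bfX(\bfw^{k+1}F)$ to $\bfX(\bfw^{k}F)$. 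So there is no ``shift depending only on $\rho$, $d$, $\ell(\bfw)$'' tying the degree in which $\rho$ occurs for $\bfw F$ to its degree for $\bpi F$. You concede this in your ``main obstacle'' paragraph, and then the real content of your argument collapses to quoting the single-degree (``clean'') statement of \cite{Du}; that is indeed what the paper does, via \cite[Cor.~3.2]{Du} together with \eqref{eq:xpi}, so the heuristic paragraph should simply be deleted rather than repaired.

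The genuine gap is the absence of the reduction to a specific root. The theorem is asserted for an \emph{arbitrary} $\bfw\in B_W^+$ with $\bfw^d=\bpi$, whereas the results of \cite{Du} (like the Brou\'e--Michel conjecture they address) concern particular good roots of $\bpi$, with $d$ a regular number. The paper first invokes \cite[Thm.~1.1]{GM}, which says that all $d$-th roots of $\bpi$ form a single conjugacy class in $B_W$, and then uses Theorem \ref{thm:conjugacy} (the braid-conjugacy invariance proved earlier in the paper) to replace $\bfw$ by a convenient representative; moreover \cite[Thm.~3.12]{BM} shows the image of $\bfw$ in $W$ is regular, so $d$ is a regular number (a divisor of $n$ or $n-1$), which is what makes \cite[Cor.~3.2]{Du} applicable. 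Without this step your argument only proves the statement for the specific roots treated in \cite{Du}, not for all $\bfw$ as claimed. Add the \cite{GM} plus Theorem \ref{thm:conjugacy} reduction (and the regular-number remark), and your plan coincides with the paper's proof.
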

 
\begin{proof}
By \cite[Thm. 1.1]{GM} the set of $d$-th roots of $\pi$ forms a single conjugacy class. Therefore by Theorem \ref{thm:conjugacy} it is sufficient to prove it for a specific $d$-th root. By \cite[Thm. 3.12]{BM} the image of $\bfw$ in $W$ is a regular element, therefore $d$ must be a regular number (in this case a divisor of $n$ or $n-1$). Then the result follows from \cite[Cor. 3.2]{Du} and \eqref{eq:xpi}.
\end{proof} 
 
\begin{rmk}
Note that the combination of \cite[Cor. 3.2]{Du} and \eqref{eq:xpi} gives a complete proof of the conjecture stated in \cite[Conj. 1]{Du}. This implies that one can compute explicitly the cohomology of Deligne--Lusztig varieties attached to roots of $\pi$, and also their parabolic versions.
\end{rmk}

\end{document}